\def\intprod{\negthinspace
\mathbin{\raisebox{.4ex}{\hbox{\vrule height .5pt width 5pt depth 0pt %
        \vrule height 3pt width .5pt depth 0pt}}}}
\def\ctimes{\times \cdots\times}
\def\t{\tau}
\def\trank{\text{rank}}
\def\BF{\mathbb F}
\def\BP{\mathbb P}
\def\pp#1{\mathbb P^{#1}}
\def\pp#1{{\mathbb P}^{#1}}
\def\tdim{\rm dim}
\def\hd{,...,}
\def\ww{\wedge}
\def\upperp{{}^\perp}
\def\inv{{}^{-1}}
\def\cB{{\mathcal B}}
\def\cE{{\mathcal E}}
\def\cC{{\mathcal C}}
\def\cS{{\mathcal S}}\def\cM{{\mathcal M}}
\def\cO{{\mathcal O}}
\def\BG{\mathbb G}
\def\11{\mathbf 1}
\def\l{\lambda}
\def\a{\alpha}
\def\s{\sigma}
\def\d{\delta}
\def\ot{{\mathord{\,\otimes }\,}}
\def\op{{\mathord{\,\oplus }\,}}
\def\ctimes{{\mathord{\times\cdots\times}\;}}
\def\ra{{\mathord{\;\rightarrow\;}}}
\def\tim{\text{Image}\,}
\def\tdeg{\text{deg} }
\def\tdim{\text{dim}\,}
\def\tcodim{\text{codim}\,}
\def\tker{\text{ker}\,}
\def\trank{\text{rank}\,}
\def\tlim{\text{lim}}
\def\be{\begin{equation}}
\def\ene{\end{equation}}
\newtheorem{theorem}{Theorem}[section]
\newtheorem{proposition}[theorem]{Proposition}
\newtheorem{lemma}[theorem]{Lemma}
\newtheorem{conjecture}[theorem]{Conjecture}
\theoremstyle{definition}
\theoremstyle{remark}
\def\ci{\mathcal I}
\def\sxe{\s_{(X,E)}}
\def\bpwx{{\BP (W/\hat x) }}
\def\bcx{\bold C_x}
\def\tmi{q^*({\bold M_j})^*\ot \cO_{\BP(\bold S)}(\d_j)}
\def\tmii#1{q^*({\bold M_{#1}})^*\ot \cO_{\BP(\bold S)}(\d_{#1})}
\def\htmi{{q^*(\hat{\bold M}_j)^*\ot \cO_{\BP(\bold S)}(\d_j)}}
\DeclareMathOperator{\ttop}{{top}}
\DeclareMathOperator{\Damp}{Damp}
\def\co{\colon\thinspace}
\newcommand{\sing}[1]{{#1}_{\text{sing}}}
\begin{document}
\title[On the Debarre-de Jong and Beheshti-Starr conjectures]{On the Debarre-de Jong and Beheshti-Starr conjectures on hypersurfaces with too many lines}
\author{J.M. Landsberg and Orsola Tommasi}

\begin{abstract}  
We show that the Debarre-de Jong conjecture that the Fano scheme of lines on a smooth hypersurface of   degree  at most $n$ in $\pp n$ must
have its expected dimension,
 and the Beheshti-Starr conjecture  that bounds the dimension of the Fano scheme of lines  for hypersurfaces of degree at least $n$ in $\pp n$,  reduce to determining if the intersection of the 
top Chern classes of certain vector bundles 
is nonzero.
\end{abstract}

 \thanks{Landsberg supported by NSF grant DMS-0805782}
\email{jml@math.tamu.edu, tommasi@math.uni-hannover.de }
\maketitle

%---------------------------------------------------------------------
%---------------------------------------------------------------------

%---------------------------------------------------------------------
%---------------------------------------------------------------------

\section{Introduction}

Let $K$ be an algebraically closed field of characteristic zero.
Write $\sing X$ for the singular points of a variety $X$, $\pp n=K\pp n$ and $\BG(\pp 1,\pp n)=G(2,n+1)$ for the Grassmannian.

\begin{conjecture}\label{mainthm} Let $X^{n-1}\subset  \pp n$ be a
hypersurface of degree $d\geq n$ and let $\BF(X)\subset \BG(\pp 1,\pp n)$ denote
the Fano scheme of lines on $X$. Let $  B\subset \BF(X)$ be
an irreducible component of maximal dimension. 
Let
$\ci_B :=\{ (x,E)\mid x\in X, E\in B, x\in \BP E\}$
and let $\pi,\rho$ denote the projections to
$X$ and $B$. Let $X_B=\pi(\ci_B)\subseteq X$ and let $\tilde \cC_x=\pi\rho\inv\rho\pi\inv(x)$.

  If $\tdim \BF(X)\geq n-2$, then
for all $x\in  X_B$,
$\tilde \cC_x\cap \sing X \neq\emptyset$.
\end{conjecture}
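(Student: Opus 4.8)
The plan is to argue by contradiction, pushing the geometric conclusion onto the deformation theory of the lines in $B$ and then onto the (non)vanishing of a Chern number on the incidence correspondence $\ci_B$. Assume $\tdim\BF(X)\ge n-2$ and suppose some $x\in X_B$ had $\tilde\cC_x\cap\sing X=\emptyset$, so that every line $E\in B$ through $x$ lies in $X\setminus\sing X$. For such an $\ell=\BP E$ the conormal sequence exhibits $N_{\ell/X}$ as a subbundle of $N_{\ell/\pp n}\cong\cO_\ell(1)^{\oplus(n-1)}$ with locally free quotient $\cO_\ell(d)$, forcing a splitting $N_{\ell/X}\cong\bigoplus_{i=1}^{n-2}\cO_\ell(a_i)$ with $a_i\le 1$ and $\sum_i a_i=n-1-d$. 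The degree ceiling $a_i\le 1$ is precisely what is lost when $\ell$ meets $\sing X$: there the map $\cO_\ell(1)^{\oplus(n-1)}\to\cO_\ell(d)$ drops rank, the quotient gains torsion, the degree of the normal sheaf jumps, and wider deformation spaces open up. The whole argument must trade on this dichotomy.

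First I would run the dimension count on $\ci_B$. With $B_x$ the family of lines of $B$ through a general $x\in X_B$, the two projections give $\tdim B_x=\tdim B+1-\tdim X_B$, while $T_{[\ell]}B_x$ injects into $H^0(N_{\ell/X}(-x))$; under the smooth-locus assumption the splitting type forces $h^0(N_{\ell/X}(-x))=\#\{i:a_i=1\}$, and $\tilde\cC_x$ is a cone of dimension $\tdim B_x+1$ whose directions lie in $\tbase\ii\subset\BP T_xX$. These bounds by themselves are not sharp: because $\tdim B$ exceeds the expected value $2n-3-d$ by at least $d-n+1\ge 1$, one learns only that $H^1(N_{\ell/X})\ne 0$, equivalently that some summand satisfies $a_i\le -2$, and the crude estimates on $\tdim B$ that follow stay well above $n-2$. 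This is the point where elementary normal-bundle geometry runs out and an excess/Chern-class input becomes unavoidable — which is consistent with the statement being a genuine conjecture rather than an easy lemma.

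The decisive step, and the one I expect to be the true obstacle, is to make this excess cohomological. Globalizing the normal bundles over $B$ produces an obstruction sheaf $R^1\rho_*N$ whose nonvanishing records the excess $\tdim B-(2n-3-d)>0$; under the smooth-locus hypothesis the bundles entering this computation are assembled from the positive factors of $\cO_\ell(1)^{\oplus(n-1)}$ and from the restrictions of the partials $\partial f/\partial x_0,\dots,\partial f/\partial x_n$ to the universal line, so their Chern classes are explicitly controlled. The condition $\tilde\cC_x\cap\sing X\ne\emptyset$ then becomes the statement that the degeneracy locus of the map built from these partials meets every fibre of $\pi$ over $X_B$, and this would follow once the intersection of the top Chern classes of the relevant bundles is shown to be nonzero. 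I do not expect to be able to carry out this nonvanishing by hand: turning the qualitative positivity coming from $H^1(N_{\ell/X})\ne 0$ into an actual positive intersection number, uniformly over $X_B$, is exactly the reduction announced in the abstract, and it is the step I would leave flagged as the remaining open problem.
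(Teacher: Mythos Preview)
First, a framing point: the statement you were asked to prove is labeled a \emph{Conjecture} in the paper, and the paper does not prove it. What the paper does is reduce it to the nonvanishing of a specific Chern-class product (their equation involving $c_{\ttop}(\bold M_j^*\otimes\cO(\delta_j))$), and then verify that product is nonzero in some special cases. Your proposal ends in the same posture---flagging a Chern nonvanishing as the residual open problem---so in that sense you land where the paper lands. But the path you take is genuinely different from theirs, and your reduction is missing the key structural step that makes theirs tractable.

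The paper does \emph{not} work primarily through the normal bundle $N_{\ell/X}$ and its splitting type. Instead it analyzes the tangent space $T_EB\subset E^*\otimes W/E$ directly: it identifies a subspace $\Pi\subset W/E$ with $T_E\cC_x=\hat x^{\perp E}\otimes\Pi$, observes that $T_EB/(E^*\otimes\Pi)$ is a linear space of $2\times m$ matrices of constant rank two, and puts this space into a block normal form. The payoff is concrete: for each block of size $s_j$ one extracts a single polynomial $p^E_j\in S^{d-s_j}E^*$ such that every partial $(w\intprod P)|_E$ is a multiple of one of the $p^E_j$. Thus a point $y\in\BP E$ is singular on $X$ iff all the $p^E_j$ vanish at $y$---and there are only $r\le\tdim\cC_x+1$ of them, not $n$. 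This is the mechanism that brings the expected codimension of the singular locus on $\tilde\cC_x$ down to $\tdim\tilde\cC_x$. The vector bundles $\bold M_j$ whose top Chern classes must not vanish are built from these $p^E_j$, not from the Jacobian map you describe.

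Your sketch, by contrast, packages the problem as the degeneracy locus of the map of partials $\cO_\ell(1)^{\oplus(n-1)}\to\cO_\ell(d)$ on the universal line, with the obstruction sheaf $R^1\rho_*N$ recording the excess dimension. That is a natural first instinct, and the splitting-type observation ($a_i\le 1$, some $a_i\le -2$) is correct. But as stated it does not explain why the number of conditions cutting out $\sing X$ on a line drops from $n-1$ to $\tdim\tilde\cC_x$; you simply assert that the relevant Chern classes are ``explicitly controlled'' and that a degeneracy locus should meet each fibre. Without the normal-form reduction, the expected codimension of that degeneracy locus is far larger than $\tdim\tilde\cC_x$, and there is no reason to expect nonemptiness. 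In other words, the paper's main new content---the normal form and the construction of the $\bold M_j$---is exactly what your outline is missing, and without it the Chern-class problem you arrive at is not the same (and is much less approachable) than the one the paper isolates.
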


By taking hyperplane sections in the case $d=n$, Conjecture \ref{mainthm}  would imply the following conjecture,
which was conjectured independently by  O.  Debarre and J. de Jong:

 \begin{conjecture}[Debarre-de Jong conjecture]\label{d-dj} Let $X^{n-1}\subset \pp n$ be a smooth hypersurface
of degree $d\leq n$, then the dimension of the Fano scheme  of lines
on $X$  equals $2n-d-3$.
\end{conjecture}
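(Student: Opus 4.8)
Throughout I assume Conjecture \ref{mainthm} and deduce Conjecture \ref{d-dj}. Only the upper bound is at issue: writing $\cS$ for the rank-$2$ tautological subbundle on $\BG(\pp 1,\pp n)=G(2,n+1)$, the scheme $\BF(X)$ is the zero locus of the section of $S^d\cS^*$ cut out by a defining equation of $X$, a bundle of rank $d+1$ on a variety of dimension $2n-2$; since this zero locus is nonempty for $d\le 2n-3$ (in particular for $d\le n$), every component of $\BF(X)$ has dimension at least $2n-2-(d+1)=2n-d-3$. Thus Conjecture \ref{d-dj} amounts to $\tdim\BF(X)\le 2n-d-3$. Suppose this fails, and among all smooth counterexamples---smooth $X\subset\pp N$ of degree $d\le N$ with $\tdim\BF(X)>2N-d-3$---choose one for which $N-d\ge 0$ is minimal. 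I will derive a contradiction, treating the terminal case $N-d=0$ separately from the descent $N-d>0$.

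\textbf{Terminal case $d=N$.} Here $X$ is smooth, so $\sing X=\emptyset$ and hence $\tilde\cC_x\cap\sing X=\emptyset$ for every $x$. Since $\tdim\BF(X)>2N-d-3=N-3$, we have $\tdim\BF(X)\ge N-2$, so the hypothesis of Conjecture \ref{mainthm} holds (with its $n$ equal to $N$ and degree $d=N$, so that $d\ge n$). As $B$ is a nonempty component, $X_B=\pi(\ci_B)$ is nonempty, and Conjecture \ref{mainthm} forces $\tilde\cC_x\cap\sing X\ne\emptyset$ for all $x\in X_B$---contradicting $\sing X=\emptyset$.

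\textbf{Descent $d<N$.} Let $B\subseteq\BF(X)$ be a component of maximal dimension, so that $\tdim B=\tdim\BF(X)\ge 2N-d-2\ge 2$ (the few cases with $N$ small being classical). Form the incidence variety $\ci_B=\{(\ell,H)\mid \ell\in B,\ \ell\subset H\}\subseteq B\times(\pp N)^*$. The projection to $B$ is a $\pp{N-2}$-bundle, the hyperplanes through a fixed line forming a $\pp{N-2}$, so $\tdim\ci_B=\tdim B+N-2\ge N$; the fibre of the second projection $q\co\ci_B\to(\pp N)^*$ over $H$ is $\{\ell\in B\mid\ell\subset H\}\subseteq\BF(X\cap H)$. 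For general $H$ the section $X':=X\cap H\subset\pp{N-1}$ is smooth of degree $d\le N-1$ by Bertini. Granting that $q$ is dominant onto a locus of such general $H$, a general fibre is nonempty of dimension $\tdim B+N-2-N=\tdim B-2$, so $\tdim\BF(X')\ge\tdim\BF(X)-2>2N-d-5=2(N-1)-d-3$. Then $X'$ is a counterexample with $(N-1)-d=(N-d)-1$ smaller, contradicting minimality.

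\textbf{The main obstacle.} The heart of the matter is the dimension count in the descent, namely that a general hyperplane section drops $\tdim\BF$ by at most two \emph{while remaining smooth}. Non-emptiness is not formal: $\cS^*$ is only nef and not ample on $G(2,n+1)$---it restricts to $\cO\oplus\cO(1)$ on the pencils $\{\ell\mid p\in\ell\subset\Pi\}$ for a point $p$ and plane $\Pi\supset p$---so one cannot merely invoke positivity to see that the zero locus of the section $s_H\in\Gamma(\cS^*|_B)$ meets $B$. One must instead show $q$ is dominant, i.e.\ that a general hyperplane contains a line of the \emph{top-dimensional} component $B$, and reconcile this with Bertini: the genericity needed for $X'$ to be smooth (avoiding the dual variety $X^\vee$) must be compatible with the constraint that $H$ contain a $B$-line, that is, $q(\ci_B)\not\subseteq X^\vee$. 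If instead $q$ is non-dominant, then already a single section $X\cap H_0$ carries a family of $B$-lines of dimension $\ge\tdim B-1$, an even larger excess---but then one must arrange $H_0$ general enough within $q(\ci_B)$ that $X\cap H_0$ is still smooth. Verifying this compatibility of genericity conditions is the step I expect to require the most care.
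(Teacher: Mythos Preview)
Your reduction—repeatedly slicing by hyperplanes until the ambient dimension equals the degree, then invoking Conjecture~\ref{mainthm} on the resulting smooth degree-$n$ hypersurface in $\pp n$—is exactly the route the paper indicates; the paper itself offers nothing beyond the clause ``by taking hyperplane sections in the case $d=n$.'' You have in fact gone further than the paper by isolating the one point that needs care.

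That point dissolves once you check $q(\ci_B)\not\subseteq X^\vee$, which holds for every $\ell\in B$ individually as soon as $N\ge 4$. A general $H\in\ell^\vee\cong\pp{N-2}$ gives $X\cap H$ smooth off the base locus $\ell$ by Bertini; at $p\in\ell$ the section is singular precisely when $H=T_pX$ (note $\ell\subset T_pX$ since $\ell\subset X$ through $p$), so the bad hyperplanes are the image of $\ell\to\ell^\vee$, $p\mapsto T_pX$, a curve at most. Hence for $N\ge 4$ the general $H\in\ell^\vee$ already gives a smooth section, so $\ell^\vee\not\subseteq X^\vee$ and a fortiori $\overline{q(\ci_B)}\not\subseteq X^\vee$. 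Now take $H$ general in $\overline{q(\ci_B)}$: then $X\cap H$ is smooth \emph{and}, dominant or not, the fiber has dimension $\tdim\ci_B-\tdim\overline{q(\ci_B)}\ge\tdim B-2$, which is all your descent needs. The residual low cases ($d\le 2$, or the terminal step at $N=3$) are, as you say, classical.
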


Our conjecture extends to smaller degrees as follows:

\begin{conjecture}\label{thmb} Let $X^{n-1}\subset  \pp n$ be a
hypersurface of degree $n-\l$. Let $  B\subset \BF(X)$ be
an irreducible component of   dimension $n-2$, with $\ci_B,X_B$ etc\dots as above.
 If $\tcodim(X_B,X)\geq \l$ and $\cC_x$ is reduced for general $x\in X_B$, then
for all $x\in  X_B$,
$\tilde \cC_x\cap \sing X \neq\emptyset$.
\end{conjecture}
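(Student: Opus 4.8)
The plan is to argue by contradiction and to convert the conclusion into the non-vanishing of a top Chern class, which is precisely the reduction advertised in the abstract. Suppose the statement fails, so there is a point $x_0\in X_B$ with $\tilde\cC_{x_0}\cap\sing X=\emptyset$; then $x_0$ is a smooth point of $X$ and every line of $B$ through $x_0$ lies in the smooth locus. First I would pin down how many lines pass through $x_0$. Since $\rho\co\ci_B\to B$ is a $\pp1$-bundle, $\tdim\ci_B=\tdim B+1=n-1$, and since $\pi\co\ci_B\to X_B$ is surjective with $\tdim X_B=(n-1)-\tcodim(X_B,X)\le n-1-\l$, every fibre of $\pi$ has dimension at least $\tdim\ci_B-\tdim X_B\ge\l$. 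As the fibre of $\pi$ over $x_0$ maps isomorphically to $\cC_{x_0}=\rho\pi\inv(x_0)$, this gives $\tdim\cC_{x_0}\ge\l$. On the other hand, at a smooth point the lines of $X$ through $x_0$ are cut out in $\BP(T_{x_0}X)=\pp{n-2}$ by the fundamental forms $F_2,\dots,F_d$, so the expected dimension is $(n-2)-(d-1)=\l-1$: the family through $x_0$ carries a genuine excess, and the reducedness hypothesis on $\cC_x$ is what will let me treat this excess scheme-theoretically.

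The bridge from ``avoiding $\sing X$'' to Chern classes is the conormal sequence. For a line $\ell\subset X$ along which $X$ is smooth there is an exact sequence
\[
0\to N_{\ell/X}\to N_{\ell/\pp n}=\cO_\ell(1)^{n-1}\xrightarrow{\ \phi\ }\cO_\ell(d)\to 0,
\]
in which $\phi$ is the transverse part of $dF$; a point $y\in\ell$ is a singular point of $X$ exactly where $\phi$ fails to be surjective, i.e. where the $(n-1)$ degree-$(d-1)$ forms defining $\phi$ have a common zero. Hence the assumption that every line of $B$ through $x_0$ avoids $\sing X$ says precisely that, over the universal line $\tilde\ci_{x_0}\to\cC_{x_0}$, the family version $\Phi$ of $\phi$ is everywhere surjective, equivalently that a naturally associated section of a bundle $\cE$ (the degeneracy bundle of $\Phi$) is nowhere zero. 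A nowhere-vanishing section forces $c_{\ttop}(\cE)=0$, so the conclusion at $x_0$ reduces to proving $c_{\ttop}(\cE)\neq 0$.

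The delicate bookkeeping is to arrange $\trank\cE=\tdim$ of the chosen base, for only then is $c_{\ttop}(\cE)$ an Euler class obstructing a nowhere-zero section. The naive gradient bundle $\cO(d-1)^{n-1}$ has rank $n-1$, too big for $\tilde\ci_{x_0}$; the remedy is to exploit that the lines genuinely lie on $X$, using the fundamental forms $F_2,\dots,F_d$ to cut $\cE$ down. Concretely I would assemble $\cE$ over $B$ (or over $\ci_B$) from the pullbacks of $\cS^*$ and $\operatorname{Sym}^i\cS^*$ from $G(2,n+1)=\BG(\pp1,\pp n)$, with the $F_i$ supplying the section. Here the hypothesis $\tcodim(X_B,X)\ge\l$ is exactly what forces $\trank\cE=\tdim B=n-2$, and the reducedness of $\cC_x$ guarantees that the scheme whose class is $c_{\ttop}(\cE)$ is the honest locus of lines meeting $\sing X$, with no spurious multiplicities. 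Evaluating $c_{\ttop}(\cE)$ then becomes a weighted intersection number in $c_1(\cS^*)$ and $c_2(\cS^*)$ on $B$, a Schubert-type computation.

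The main obstacle is the very last step: showing $c_{\ttop}(\cE)\neq0$. No formal positivity is available, since the Chern roots of $\operatorname{Sym}^i\cS^*$ are not effective and the sign of the resulting intersection number is genuinely at issue; this non-vanishing is exactly the combinatorial problem to which, per the abstract, the whole conjecture is meant to reduce, and it is what I expect to be hard. A secondary obstacle is making the rank-equals-dimension matching valid at the possibly special point $x_0$ and on the possibly singular, merely $(n-2)$-dimensional component $B$: here I would pass to a resolution of $B$ and use the reducedness hypothesis to control the jumping of the fibres $\cC_x$, so that the Euler-class argument, a priori a generic statement, applies to the given $x_0$.
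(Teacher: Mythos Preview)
First, a framing remark: the statement you are attempting is labelled a \emph{Conjecture} in the paper and is not proved there. What the paper does is reduce Conjectures~\ref{mainthm} and~\ref{thmb} to the non-vanishing condition \eqref{stareqn} on top Chern classes, and then establish that non-vanishing only in special cases (Theorems~\ref{everyp1} and~\ref{m1lemma}, and \S\ref{twolb}). So you should not expect to produce a complete proof; at best you can hope for a clean reduction, and you correctly identify the Chern-class non-vanishing as the residual hard step.

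That said, your reduction is not the paper's, and yours has a genuine gap at the point you yourself flag as ``delicate bookkeeping''. You propose to build $\cE$ from the conormal sequence and the fundamental forms $F_2,\dots,F_d$, and you assert that the codimension hypothesis forces $\trank\cE=\tdim B=n-2$. But you never construct $\cE$, and there is no reason the fundamental forms at $x_0$ alone should collapse the $n-1$ partial derivatives $(w_i\intprod P)|_E$ down to the right number of independent conditions on each line. The paper's mechanism for this reduction is entirely different and is the real content of \S\S\ref{T_EB}--\ref{sect3}: one analyses $T_EB\subset E^*\ot W/E$, observes that $T_EB/(E^*\ot\Pi)$ is a linear space of $2\times m$ matrices of constant rank two, puts it in the normal form \eqref{norformeqn}, and from this extracts polynomials $p^E_1,\dots,p^E_r$ with $r\le\tdim\cC_x+1$ whose common zero set on $\BP E$ is exactly $\BP E\cap\sing X$ (Lemma~\ref{normalform}, Proposition~\ref{psingprop}). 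It is this normal-form argument, not the fundamental forms, that cuts the number of equations down to match $\tdim\tilde\cC_x$. The resulting bundles are the $\bold M_j$ over a blow-up $\cB\to B$, and the Chern-class condition is the product \eqref{stareqn} over $\BP(\bold S)|_{\bold C_x}$, not a single $c_{\ttop}(\cE)$ over $B$.

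There is also a base-space confusion in your sketch: you alternate between working over $\tilde\ci_{x_0}$ (correct, since the conclusion is about $\tilde\cC_{x_0}$) and over $B$ (where you want $\trank\cE=n-2$). The paper works over $\bold C_x$, and the rank--dimension match is $\sum_j\trank\bold M_j=r\le\tdim\cC_x+1=\tdim\BP(\bold S)|_{\bold C_x}$; the hypotheses $\tdim B=n-2$ and $\tcodim(X_B,X)\ge\l$ enter only to guarantee $r\le\tdim\cC_x+1$ via the count $r=n-1-\tdim T_EB+\tdim T_E\cC_x$. Finally, the paper does extract some positivity: the bundles $\hat{\bold M}_j^*\ot\cO(\d_j)$ are shown to be generically ample on $\bold C_x$ (Lemma~\ref{genamplelemma}), which is what drives the partial results. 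Your claim that ``no formal positivity is available'' is thus too pessimistic, though it is true that generic ampleness alone does not settle \eqref{stareqn}.
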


 The cases $X_B=X$ and $\tcodim(X_B,X)=\frac n2$ are known, e.g., they
appear in Debarre's unpublished notes containing Conjecture~\ref{d-dj}. 
In \cite{harrisetal},  J. Harris et. al.  proved Conjecture~\ref{d-dj} when $d$ is small with respect to $n$.
Debarre  in his unpublished notes also proved the case $d=n\leq 5$
 and
A. Collino \cite{collino} had earlier proven the case $d=n=4$. In \cite{beheshti},
R. Beheshti proved the case $d=n\leq 6$ and a different proof   was also given in \cite{LR}.  
 
 Conjecture \ref{mainthm} would also imply a special case of a conjecture of Beheshti and J. Starr
(Question 1.3 of \cite{Beh2}), about $\pp k$'s on hypersurfaces, which, in
the same paper, Beheshti proved for $k\geq (n-1)/4$ and Conjecture \ref{mainthm}   would prove 
for $k=1$.

Central to our work is finding additional structure on the tangent
space to $B$ at a general point. This structure gives rise to vector bundles
on $\cC_x$.
We   prove Conjecture \ref{mainthm} when 
the construction gives rise to 
exactly one vector bundle, see Theorem~\ref{m1lemma}.

\subsection{Overview} The statement of Conjecture~\ref{mainthm} indicates how one should look  for
singular points. Say $y\in X$ and we want to determine if $y\in \sing X$, i.e.,
if $v_0,v_1\hd v_n$ is a basis of $W$ with $y=[v_0]$, and $P$ an equation for $X$,  we
would need that all partial derivatives of local coordinates in $y$ vanish. 
This is expressed by the $n$ equations
$dP_y(v_1)=\cdots = dP_y(v_n)=0$.
If we fix a line $\BP E$ with $y\in \BP E\subset X$ and look
for a singular point of $X$ on $\BP E$, if $e_1,e_2$ is a basis of $E$ that
we expand to a basis $e_1,e_2,w_1\hd w_{n-1}$ of $W$, the equations
$dP_y(  e_1)=dP_y(e_2)=0$
 come for free, so we have one less equation to satisfy.

A further simplification is obtained by a study of $T_EB\subset T_EG(2,W)=E^*\ot W/E$.
We observe $T_EB$ is the kernel of the map 
$\a\ot w\mapsto \a\circ (w\intprod P)|_E$,
described in   Proposition \ref{tanspaceprop}. Moreover, we identify the tangent space $T_E\cC_x\subset T_EB\subset E^*\ot W/E$
to the Fano scheme of $B$-lines through $x$
as a subspace $\hat x^{\perp E}\ot \Pi$, where $\Pi\subset W/E$ is independent
of $x\in \BP E$, see Proposition \ref{tangcx}.
In the same Proposition we remark that $E^*\ot \Pi\subset T_EB$
is the intersection of $T_EB$ with the locus of rank $1$ homomorphisms in $T_EG(2,W)=E^*\ot W/E$.
As a consequence,  $T_EB/(E^*\ot \Pi)$ corresponds to a linear subspace of
the space of $2\times m$ matrices of constant rank two, for which there are normal forms.
The normal forms allow  us to reduce the number of equations defining the singular locus on a given line even
further, see \S\ref{sect3}. Now the number of equations we reduce by will depend on the  dimension of $\Pi$, but  it is always bounded by  $\tdim \tilde \cC_x$,
where $\tilde \cC_x$ is the cone swept by the lines of $B$ passing through a general point $x$. For this reason, one expects to find at least a finite number of singular points of $X$ lying on $\tilde \cC_x$.

In the second part of the paper we show that  $\sing X\cap\tilde\cC_x$ as the zero locus of a section of a vector bundle, see equation \eqref{stareqn}.
We determine certain positivity properties of the vector bundles we work with in Lemma \ref{genamplelemma}, observe an elementary case where $X$ must be singular (Theorem \ref{everyp1}) and prove
Theorem \ref{m1lemma}.

\subsection*{Acknowledgement} We thank IHES for providing us an outstanding environment for
collaborating on this project and Johan de Jong for useful comments on earlier versions of this paper. 
The second author would like to thank Remke Kloosterman for discussions during the preparation of this paper.

\section{The tangent space to $B$}\label{T_EB}
In this section we study $\pp k$'s on an arbitrary projective variety $X\subset \BP W$.
Let $W$ denote  a vector space  over an algebraically closed field $K$ of characteristic zero.
For algebraic subsets $Z\subset \BP W$, we let $\hat Z\subset W$ denote
the affine cone and  
$\BF_k (Z)\subset G(k+1 ,W)=\BG (\pp k, \BP W)$ denotes the Fano scheme of
$\pp {k}$'s on $Z$.  
Let $X \subset \BP W$ be a variety. Let $  B\subset \BF_k(X)$ be
an irreducible component.  Let
$\ci_B :=\{ (x,E)\mid E\in B, x\in \BP E\}$
be the incidence correspondence and let $\pi,\rho$ denote the projections to
$X$ and $B$. Let $X_B=\pi(\ci_B)$.
Let $\cC_x=\rho\pi\inv(x)$ and let 
$\tilde \cC_x=\pi\rho\inv(\cC_x)$, so
$\tilde \cC_x\subset X\subset \BP W$ is a cone with vertex $x$
and base isomorphic to $\cC_x$.

For a vector space $V$, $v\in V$, and $q\in S^kV^*$, we let
$v\intprod q\in S^{k-1}V^*$ denote the contraction. We also
write $q(v^a,w^{k-a})=q(v\hd v, w\hd w)$ etc. when
we consider $q$ as a multi-linear form. We denote the symmetric
product by $\circ$, e.g., for $v,w\in V$, $v\circ w\in S^2V$.
The following proposition is essentially a rephrasing of the discussion on
p. 273 of \cite{EH}. 
We include a short proof for the sake of completeness.

\begin{proposition}\label{tanspaceprop} Let $E\in \BF_k(X)$,
then $T_E\BF_k(X)=\tker\sxe$ where
\begin{equation}
\begin{aligned}
\sxe: T_EG(k+1,W)=E^*\ot W/E &\ra \oplus_d Hom(I_d(X), S^dE^*)\\
\a\ot w&\mapsto
 \{ P\mapsto  \a\circ (w\intprod P)|_E\}.
\end{aligned}
\end{equation}
\end{proposition}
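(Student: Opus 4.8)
The plan is to compute the Zariski tangent space to the scheme $\BF_k(X)$ directly from its defining equations by linearizing in a dual parameter $\e$ with $\e^2=0$. Recall that on $G(k+1,W)$ the tautological subbundle $\cS\subset W\ot\cO$ has fiber $E$ over $[E]$, that each $P\in I_d(X)$ restricts to a section of $S^d\cS^*$ whose value at $[E]$ is $P|_E\in S^dE^*$, and that $\BF_k(X)$ is cut out by the vanishing of all these restrictions, over all $d$. A tangent vector at $E$ is an element $\phi\in T_EG(k+1,W)=E^*\ot W/E$, i.e.\ a homomorphism $\phi\co E\ra W/E$, and I would realize the associated first-order deformation as the $K[\e]/(\e^2)$-submodule $E_\e=\{e+\e\tilde\phi(e)\mid e\in E\}$, where $\tilde\phi\co E\ra W$ is any lift of $\phi$.

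First I would verify that $\sxe$ is well defined on $E^*\ot W/E$, i.e.\ that $\a\circ(w\intprod P)|_E$ depends only on $w$ modulo $E$. This is where the hypothesis $E\in\BF_k(X)$ enters: since $\BP E\subset X$ we have $P|_E=0$ for every $P\in I_d(X)$, and polarizing this identically-vanishing form shows $P(w,e,\dots,e)=0$ whenever $w,e\in E$; hence $(w\intprod P)|_E=0$ for $w\in E$, and $\sxe$ factors through $W/E$.

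The core computation is the linearization of the defining equations. For $P\in I_d(X)$ and $e\in E$, expanding the restriction to $E_\e$ gives
\begin{equation}
P(e+\e\tilde\phi(e))=P(e^d)+\e\, d\, P(\tilde\phi(e),e^{d-1})+O(\e^2),
\end{equation}
and the leading term $P(e^d)=P|_E(e)$ vanishes because $E\in\BF_k(X)$. Thus $E_\e$ defines a $K[\e]/(\e^2)$-point of $\BF_k(X)$ precisely when $P(\tilde\phi(e),e^{d-1})=0$ for all $e\in E$ and all $P\in I_d(X)$, $d\ge 1$. Writing $\phi=\sum_i\a_i\ot w_i$ with $\a_i\in E^*$ and $w_i\in W/E$, this reads $\sum_i\a_i(e)\,(w_i\intprod P)|_E(e^{d-1})=0$ for all $e$, which is exactly the statement that the symmetric form $\sxe(\phi)(P)=\sum_i\a_i\circ(w_i\intprod P)|_E\in S^dE^*$ vanishes on the diagonal, hence—by polarization in characteristic zero—vanishes identically. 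Therefore $\phi\in T_E\BF_k(X)$ if and only if $\sxe(\phi)=0$, which is the assertion $T_E\BF_k(X)=\tker\sxe$.

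The main obstacle I anticipate is bookkeeping rather than conceptual. One must take the deformation scheme-theoretically so that the above genuinely computes the Zariski tangent space, since $\BF_k(X)$ may be nonreduced; here the fact that the sections $P|_{(\cdot)}$ vanish at $[E]$ is convenient, as it makes their linearization canonical. One must also track the combinatorial constants hidden in the contraction $\intprod$ and the symmetric product $\circ$ to confirm that the diagonal value of $\sxe(\phi)(P)$ agrees with $\sum_i\a_i(e)\,P(w_i,e^{d-1})$ up to a nonzero scalar—all that is needed for the kernel. Finally, independence of the chosen lift $\tilde\phi$ is automatic: replacing $\tilde\phi$ by $\tilde\phi+\psi$ with $\psi(E)\subset E$ changes $P(\tilde\phi(e),e^{d-1})$ by $P(\psi(e),e^{d-1})=0$, which is precisely the well-definedness established in the second step.
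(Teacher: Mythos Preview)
Your proposal is correct and follows essentially the same approach as the paper: both realize a tangent vector as a first-order deformation $E_\e$ (equivalently $E_t$ over $K[t]/(t^2)$), expand $P$ on $E_\e$ to first order, use $P|_E=0$ to kill the constant term, and identify the linear term with $\sxe(\phi)(P)$. The only cosmetic difference is that the paper fixes a basis $e_0,\dots,e_k$ of $E$ and checks vanishing of all polarized coefficients $P(e_0^{b_0},\dots,e_k^{b_k})$ directly, whereas you evaluate on the diagonal $P(e+\e\tilde\phi(e))$ and then invoke polarization in characteristic zero; these are equivalent bookkeeping choices.
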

 
\begin{proof}
We first note that $(w\intprod P)|_E$ is well defined because $P|_E=0$. Without loss of generality, we can restrict to the case where $X$ is a hypersurface defined by a degree $d$ polynomial $P$. The general case will follow by considering intersections.

Let $e_0,\dots,e_k$ be a basis of $E$, $\alpha_0,\dots,\alpha_k$ be the dual basis. A tangent vector $\eta=\alpha_0\otimes \bar w_0 + \cdots + \alpha_k\otimes \bar w_k\in T_E G(k+1,W)$ corresponds to the first order deformation $E_t = \langle e_0+tw_0,\dots,e_k+tw_k\rangle$ of $E$ in $W$, where the $w_j$ are arbitrary liftings of the $\bar w_j$ to $W$. Recall that $E=\langle e_0,\dots,e_k\rangle$ belongs to $\BF_k(X)$ if and only if $P$ vanishes on all points of $\BP E$, i.e., if and only if $P(e_0^{b_0},\dots,e_k^{b_k})=0$ for all $b_0,\dots,b_k$ such that $b_0+\cdots+b_k=d$. Therefore, the condition $\eta\in T_E\BF_k(X)$ is equivalent to the vanishing of   
$$
\begin{aligned}
&P((e_0+tw_0)^{b_0},(e_1+tw_1)^{b_1}\hd (e_k+tw_k)^{b_k})\\
&=
P(e_0^{b_0},\dots,e_k^{b_k})
+t[P(e_0^{b_0-1},w_0,e_1^{b_1}\hd e_k^{b_k})+\cdots + P(e_0^{b_0 }, e_1^{b_1}\hd e_k^{b_k-1},w_k)]\\
&=
0 + t [(\alpha_0\circ P)(w_0,e_0^{b_0},\dots,e_k^{b_k}) + (\alpha_1\circ P)(w_1,e_0^{b_0},\dots,e_k^{b_k})
+\cdots + (\alpha_k\circ P)(w_k,e_0^{b_0},\dots,e_k^{b_k})] \\
&=
t[(\sxe(\eta)) (e_0^{b_0},\dots,e_k^{b_k})] 
\end{aligned}
$$
in $K[t]/(t^2)$ for every choice of $b_0,\dots,b_k$. This implies the claim.
\end{proof}

\begin{proposition}\label{tangcx} Notations as above. Let  $x$ be a general point of
$X_B$ and $E$   a general point of $B$ with $x\in \BP E$.
\begin{enumerate}
\item If there exist 
     $w \in W/E$
and $\a\in E^*\backslash 0$ such that $\sxe (\a\ot w)=0$,
then $E^*\ot w\subset \tker\sxe$.

\item Letting $\Pi\subset W/E$ be maximal such that $E^*\ot \Pi\subset \tker\sxe$,
then for $x\in \BP E$, $T_E\cC_x=\hat x^{\perp E}\ot \Pi$.
\end{enumerate}
\end{proposition}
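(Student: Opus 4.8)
The plan is to first cut out $T_E\cC_x$ inside $T_EB=\tker\sxe$ by a single linear condition, then prove (1) from the fact that $S^\bullet E^*$ is an integral domain, and finally deduce (2) from (1) together with a syzygy analysis. Since $\cC_x$ is (isomorphic to) the fibre $\pi\inv(x)$, a tangent vector $\eta\in T_EB$ lies in $T_E\cC_x$ exactly when the first order deformation of $E$ it determines keeps $x$ in the corresponding $\pp k$. Repeating the computation in the proof of Proposition~\ref{tanspaceprop}: if $\hat x=\langle v\rangle$ with $v=\sum_j c_je_j$ and $\eta=\sum_j\alpha_j\ot\bar w_j\in E^*\ot W/E=\thom(E,W/E)$, the deformed space $E_t=\langle e_0+tw_0\hd e_k+tw_k\rangle$ contains a lift of $v$ to first order if and only if $\sum_j c_j\bar w_j=0$ in $W/E$, that is, $\eta(v)=0$. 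Writing $\hat x^{\perp E}=\{\alpha\in E^*\mid\alpha(v)=0\}$, this gives
\begin{equation*}
T_E\cC_x=T_EB\cap\{\eta\mid\eta(\hat x)=0\}=\tker\sxe\cap(\hat x^{\perp E}\ot W/E),
\end{equation*}
using that $T_EB=T_E\BF_k(X)=\tker\sxe$ for general $E\in B$ by Proposition~\ref{tanspaceprop}.

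For (1), suppose $\sxe(\a\ot w)=0$ with $\a\neq0$, i.e.\ $\a\circ (w\intprod P)|_E=0$ in $S^dE^*$ for every $P\in I_d(X)$ and every $d$. Because $S^\bullet E^*$ is an integral domain and $\a\neq0$, this forces $(w\intprod P)|_E=0$ for all such $P$. Then for any $\b\in E^*$ we have $\sxe(\b\ot w)(P)=\b\circ (w\intprod P)|_E=0$, so $E^*\ot w\subset\tker\sxe$, which is (1). The same computation identifies
\begin{equation*}
\Pi=\{w\in W/E\mid (w\intprod P)|_E=0\ \text{for all }P\in I(X)\},
\end{equation*}
shows it is a linear subspace, and shows it is the unique maximal subspace with $E^*\ot\Pi\subset\tker\sxe$; in particular $\Pi$ depends only on $E$, so it is independent of $x$.

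For (2), the inclusion $\hat x^{\perp E}\ot\Pi\subseteq T_E\cC_x$ is immediate, since such homomorphisms kill $\hat x$ and lie in $E^*\ot\Pi\subset\tker\sxe$. For the reverse inclusion I would take $\eta\in\tker\sxe$ with $\eta(\hat x)=0$, choose $\hat x=\langle e_0\rangle$ and a basis adapted to $\eta$, and write $\eta=\sum_{i=1}^r\a_i\ot w_i$ with $w_1\hd w_r$ linearly independent (a basis of $\tim\eta$) and $\a_1\hd\a_r$ part of the dual basis, so that each $\a_i\in\hat x^{\perp E}$. For every $P$ the relation $\sxe(\eta)(P)=\sum_{i=1}^r\a_i\circ (w_i\intprod P)|_E=0$ is a syzygy among the linearly independent linear forms $\a_1\hd\a_r$, which form a regular sequence in $S^\bullet E^*$; hence each $(w_i\intprod P)|_E$ lies in the ideal generated by $\a_1\hd\a_r$, and since $x$ is a common zero of these forms, every $(w_i\intprod P)|_E$ vanishes at $x$.

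The crux is then to upgrade this vanishing at the single point $x$ to the identity $(w_i\intprod P)|_E=0$ for all $P$, which is precisely the assertion $w_i\in\Pi$, hence $\tim\eta\subseteq\Pi$ and $\eta\in\hat x^{\perp E}\ot\Pi$. I expect this to be the main obstacle, and the step where the generality of $x$ (and $E$) is essential: for a fixed $w\notin\Pi$ the nonzero homogeneous forms $(w\intprod P)|_E$ cannot all vanish at a \emph{general} point of $\BP E$, so no nontrivial Koszul syzygy can persist for general $x$. Making this rigorous requires organizing the tangent vectors $\eta$ into an algebraic family over $\BP E$ and invoking a semicontinuity or incidence-variety argument, because the adapted decomposition $\eta=\sum_i\a_i\ot w_i$ itself varies with $x$; this is the one part of the proof that is not purely formal. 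When $k=1$ the difficulty evaporates: then $\hat x^{\perp E}$ is one dimensional, every element of $\hat x^{\perp E}\ot W/E$ has rank at most one, and $T_E\cC_x\subseteq\hat x^{\perp E}\ot\Pi$ follows at once from (1).
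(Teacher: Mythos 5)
Your argument for part (1) is the paper's argument in different clothing: the paper evaluates the identity $\a(u)P(w,u^{d-1})=0$ on the dense open set where $\a(u)\neq 0$ and concludes $(w\intprod P)|_E=0$ everywhere, which is exactly your appeal to $S^\bullet E^*$ being an integral domain. Your identification $T_E\cC_x=\tker\sxe\cap(\hat x^{\perp E}\ot W/E)$, and the resulting intrinsic description of $\Pi$ as $\{w\mid (w\intprod P)|_E=0\ \forall P\}$, are correct and are more than the paper records, since its entire proof of part (2) is the sentence ``The second assertion is clear.'' For $k=1$ --- which is the only case the paper ever uses, as \S\ref{sect3} onward specializes to lines and the introduction phrases the claim inside $T_EG(2,W)$ --- your closing observation that $\hat x^{\perp E}$ is one-dimensional, so every element of $\hat x^{\perp E}\ot W/E$ is decomposable and part (1) applies directly, is precisely the justification the authors leave implicit. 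So for the case that matters your proof is complete and, if anything, more careful than the paper's.

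The one point to be upfront about: for $k\geq 2$ your syzygy argument only yields that each $(w_i\intprod P)|_E$ lies in the ideal $(\a_1,\dots,\a_r)$, hence vanishes at $x$, and you correctly flag that promoting this to $(w_i\intprod P)|_E=0$, i.e.\ $w_i\in\Pi$, is not formal. That gap is real as far as the literal general-$k$ statement of part (2) goes, but the paper does not close it either, and nothing later in the paper depends on part (2) outside $k=1$. If you restrict part (2) to $k=1$ (the generality in which it is actually applied), your proof is finished; if you insist on general $k$, the missing step is genuinely open in your write-up and is not supplied by the source you are reconstructing.
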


\begin{proof} $\sxe (\a\ot w)=0$ says 
$\a(u)P(w,u^{d-1})=0$ for all $u\in E$ and for all $P\in I(X)$.
If $P(w,u^{d-1})=0$ for all $u$ with $\a(u)\neq 0$, then $P(w,u^{d-1})=0$
for all $u\in E$, thus $E^*\ot w\subset\tker\sxe$.
 The second assertion is clear.
\end{proof}

\section{How to find singular points on $X$}\label{sect3}
We now specialize to the case $k=1$ and $X$ is a hypersurface  in $\pp n=\BP W$.
In this case $T_EB/(E^*\ot \Pi)$ is a linear subspace of
$K^2\ot K^m$ of constant rank two,
 where $m=n-1-\tdim T_E\cC_x$ in view of Proposition~\ref{tangcx}.
There is a normal form for linear subspaces $L$ of
$K^2\ot K^m$ containing no decomposable vectors.
Namely for every basis $\a^1,\a^2$   of $K^2$, 
there exist a basis
$w_1\hd w_m$   of $K^m$ and
  integers $s_1\hd s_r$, with 
$r = m-\tdim L$,
$s_1+\cdots +s_r= m$
and 
$s_1\geq s_2\geq \cdots \geq s_r\geq 1$
such that
\be
\begin{aligned}\label{norformeqn}
L=\langle &\a^1\ot w_1-\a^2\ot w_2,\a^1\ot w_2-\a^2\ot w_3
\hd \a^1\ot w_{s_1-1}-\a^2\ot w_{s_1},
\\
&
\a^1\ot w_{s_1+1}-\a^2\ot w_{s_1+2},
\a^1\ot w_{s_1+2}-\a^2\ot w_{s_1+3}\hd
\a^1\ot w_{s_2+s_1-1}-\a^2\ot w_{s_2+s_1},
\\
&
\dots
\\
&
\a^1\ot w_{s_{r-1}+\cdots +s_1+1}-\a^2\ot w_{s_{r-1}+\cdots +s_1+2}\hd
\a^1\ot w_{s_{r-1}+\cdots +s_1}-\a^2\ot w_{m}\rangle
\end{aligned}
\ene
The existence of this normal form is likely to be well known 
to the experts, although we could
not find an explicit reference. The proof is left to the reader.
Note that the normal form gives a basis of $L$ divided into $r$ blocks of length $s_1-1,\dots,s_r-1$. In particular, if for some index $j$ we have $s_j=1$, then the corresponding block is empty.

Applying this normal form, we obtain a normal form for $T_EB$.
Note that in this case 
$r=m-\tdim T_EB/(E^*\otimes\Pi)=n-1-\tdim T_EB+\tdim T_E\cC_x$. 
From now on, we will assume $\tdim B\geq n-2$, so $r\leq \tdim\cC_x+1$, 
with equality holding generically if $\tdim B=n-2$ and $B$ is reduced.

\begin{lemma}\label{normalform} Let $X\subset \BP W$ be as above and assume
$\tdeg(X)=d \geq 1+s_1$.
 Let $E$ be a general point of $B$. 
Then there exist
$p^E_j\in S^{d- s_j}E^*$, $1\leq j\leq r$ such that
$$
\tim\sxe=
S^{s_1}E^*\circ p^E_1+\cdots
+S^{s_r}E^*\circ p^E_r 
$$
where $w_1\hd w_{n-1}$ is a basis of $W/E$ such that
$\Pi=\langle w_{m+1}\hd w_{n-1}\rangle$ and $w_1\hd w_m$ are adapted to
the normal form \eqref{norformeqn}.

\end{lemma}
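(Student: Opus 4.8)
The plan is to compute $\tim\sxe$ directly. Since $X=\{P=0\}$ is a hypersurface of degree $d$, the map of Proposition~\ref{tanspaceprop} is simply
$\sxe\co E^*\ot W/E\ra S^dE^*$, $\a\ot w\mapsto \a\circ(w\intprod P)|_E$. First I would put $q_i:=(w_i\intprod P)|_E\in S^{d-1}E^*$ for $1\le i\le n-1$, so that $\sxe(\a\ot w_i)=\a\circ q_i$. Because $E^*\ot\Pi\subset\tker\sxe$ by Proposition~\ref{tangcx}, the forms $q_{m+1}\hd q_{n-1}$ vanish and hence $\tim\sxe=\sum_{i=1}^m E^*\circ q_i$. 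Moreover, the maximality of $\Pi$ forces $q_i\neq0$ for every $i\le m$: if $(w\intprod P)|_E=0$ for some nonzero $w\in\langle w_1\hd w_m\rangle$, then $E^*\ot\langle w\rangle\subset\tker\sxe$ and one could enlarge $\Pi$, a contradiction.

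Next I would read off the relations coming from the normal form. Since $T_EB=\tker\sxe$ and $E^*\ot\Pi\subset\tker\sxe$, the subspace $L=T_EB/(E^*\ot\Pi)$ is exactly the kernel of the restriction of $\sxe$ to $K^2\ot\langle w_1\hd w_m\rangle$, so each generator $\a^1\ot w_i-\a^2\ot w_{i+1}$ in \eqref{norformeqn} yields the chain relation $\a^1\circ q_i=\a^2\circ q_{i+1}$ in $S^dE^*$. Crucially, the normal form partitions $\{1\hd m\}$ into $r$ consecutive blocks of lengths $s_1\hd s_r$ with no relation bridging two blocks, so $K^2\ot\langle w_1\hd w_m\rangle$ decomposes as a direct sum over these blocks and $\tim\sxe$ is the sum of the images of the individual blocks.

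The heart of the argument is the per-block computation, carried out in the UFD $S^\bullet E^*=K[\a^1,\a^2]$, in which $\a^1$ and $\a^2$ are coprime. Within one block, relabelling its $s_j$ consecutive indices $1\hd s_j$, the relations $\a^1\circ q_i=\a^2\circ q_{i+1}$ ($1\le i\le s_j-1$) say that $q_{i+1}/q_i=\a^1/\a^2$ in the fraction field; since the $q_i$ are nonzero and $\a^1,\a^2$ are coprime, this forces $(\a^2)^{s_j-1}\mid q_1$. Writing $q_1=(\a^2)^{s_j-1}\circ p^E_j$ with $p^E_j\in S^{d-s_j}E^*$, one obtains $q_i=(\a^1)^{i-1}(\a^2)^{s_j-i}\circ p^E_j$ throughout the block (the hypothesis $d\ge1+s_1\ge1+s_j$ guarantees that $p^E_j$ is an honest form of positive degree; an empty block $s_j=1$ is the degenerate case $p^E_j=q_1$). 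Feeding this back, as $i$ runs over the block and the $E^*$-factor varies, $\a\circ q_i$ sweeps out $(\a^1)^{a}(\a^2)^{s_j-a}\circ p^E_j$ for all $a=0\hd s_j$, that is, exactly $S^{s_j}E^*\circ p^E_j$. Summing over the $r$ blocks gives $\tim\sxe=S^{s_1}E^*\circ p^E_1+\cdots+S^{s_r}E^*\circ p^E_r$.

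I expect the main obstacle to be the divisibility and degree bookkeeping inside the block: one must pass cleanly from the chain relations to the single generator $p^E_j$ (invoking unique factorization only after checking $q_i\neq0$ so the ratios are legitimate), keep careful track that $\deg p^E_j=d-s_j$, and verify that the monomials $(\a^1)^a(\a^2)^{s_j-a}$, $0\le a\le s_j$, genuinely form a basis of $S^{s_j}E^*$, so that the block image is all of $S^{s_j}E^*\circ p^E_j$ and not a proper subspace.
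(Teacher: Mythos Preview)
Your proposal is correct and follows essentially the same route as the paper: both set $q_i=(w_i\intprod P)|_E$, read off the chain relations $\a^1\circ q_i=\a^2\circ q_{i+1}$ from the normal form, and factor each block down to a single generator $p^E_j$ with $q_i=(\a^1)^{i-1}(\a^2)^{s_j-i}\circ p^E_j$. The only cosmetic difference is that the paper peels off one factor of $\a^1,\a^2$ at a time (introducing intermediate forms $\phi_1,\phi_2,\dots$), whereas you invoke unique factorization in $K[\a^1,\a^2]$ once to get the divisibility $(\a^2)^{s_j-1}\mid q_1$ directly; your explicit justification that $q_i\neq 0$ from the maximality of $\Pi$ is a point the paper leaves implicit.
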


We remark that here and in Lemma \ref{lemmabelow} below, one can drop the
assumption that $E$ is a general point of $B$. The only change at special
points is that the normal form \eqref{norformeqn} will be different.
\begin{proof}
Apply the normal form to $\tker\sxe/(E^*\ot\Pi)$.
For $1\leq j\leq s_1-1$ we have
\be\label{relationeqna}
\a^1\circ (w_j\intprod P)|_E = \a^2\circ (w_{j+1}\intprod P)|_E
\ene
Since $\a^1,\a^2$ are linearly independent, for $j=1$ this
implies there exists $\phi_1\in S^{d-2}E^*$ such that
$(w_1\intprod P)|_E=\a^2\circ \phi_1$ and 
$(w_2\intprod P)|_E=\a^1\circ \phi_1$.
But for the same reason, when $j=2$ we see there exists
$\phi_2\in S^{d-3}E^*$ such that
\be\label{relationeqnb}
(w_1\intprod P)|_E=(\a^2)^2\circ \phi_2,
\ \ \ 
(w_2\intprod P)|_E=(\a^1\circ \a^2) \circ \phi_2,
\ \  \ 
(w_3\intprod P)|_E=(\a^1)^2\circ \phi_2
\ene
and so on until we arrive at $\phi_{s_1-1}=:p_1^E\in S^{d-s_1}E^*$, such
that $(w_j\intprod P)|_E=(\a^1)^{j-1}(\a^2)^{s_1-j}p_1^E$
for $1\leq j\leq s_1$.
In particular,
$S^{s_1}E^*\circ p_1^E\subset\tim\sxe$.
Continuing in this way for the other chains in the normal form
we obtain polynomials $p_1^E\hd p_r^E$ 
with $S^{s_k}E^*\circ p_k^E\subset \tim \sxe$.
Note that if $s_k=1$ we set $p_k^E=(w_{s_{k-1}+\dots+s_{1}+1}\intprod P)|_E$.
\end{proof}

Note that without assumptions on the degree,
the conclusion of Lemma \ref{normalform} can fail, e.g., 
if $d=3$ and $s_1=m=3$, as in the case of a general cubic hypersurface, 
then \eqref{relationeqnb} only says $(w_1\intprod P)|_E= (\a^2)^2$,
$(w_2\intprod P)|_E= \a^1\circ \a^2  $ and 
$(w_3\intprod P)|_E=(\a^1)^2$. This does imply that the   image of $\BP E $
under the Gauss map of $X$
is a rational normal 
curve of 
degree two in $\BP (E+\Pi)\upperp\subset \BP W^*$ and one can obtain
similar precise information about the Gauss image of $\BP E$ in other cases.
However, as long as $\tdeg(X)\geq s_1+1$ 
the conclusion holds.

When $s_1=n-1-\tdim \cC_x$ there is  a single polynomial  on $\BP E$
whose zero set corresponds to singular points of $X$. Thus:
 
\begin{theorem}\label{everyp1} 
Let $X^{n-1}\subset \pp n$ be a hypersurface, with $B,\tilde\cC_x$ etc. as above.
If   $\tdeg(X)\geq s_1+1$ and $s_1=n-1-\tdim\cC_x$, then
for all $E\in B$, $\BP E\cap \sing X\neq\emptyset$.
\end{theorem}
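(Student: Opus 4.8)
The plan is to show that, for a \emph{general} $E\in B$, the set $\sing X\cap \BP E$ is exactly the zero locus of a single binary form on $\BP E\cong \pp 1$, which is automatically nonempty over an algebraically closed field, and then to propagate this to \emph{all} $E$ by a semicontinuity argument. First I would observe that the hypothesis $s_1=n-1-\tdim\cC_x$ coincides, at a general $E$ where $\cC_x$ is smooth, with $m=n-1-\tdim T_E\cC_x$; combined with the constraint $s_1+\cdots+s_r=m$ coming from the normal form \eqref{norformeqn} and $s_i\geq 1$, this forces $r=1$, so the normal form reduces to a single chain of length $s_1=m$. Since $\tdeg X\geq s_1+1$, Lemma \ref{normalform} then produces a single polynomial $p_1^E\in S^{d-s_1}E^*$ with
$$
(w_j\intprod P)|_E=(\a^1)^{j-1}(\a^2)^{s_1-j}\,p_1^E,\qquad 1\leq j\leq s_1,
$$
where $\a^1,\a^2$ is the basis of $E^*$ dual to a basis $e_1,e_2$ of $E$, and $w_1\hd w_{n-1}$ is the adapted basis of $W/E$ with $\Pi=\langle w_{m+1}\hd w_{n-1}\rangle$.

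Next I would identify $\sing X\cap\BP E$ explicitly. A point $y=[\hat y]\in\BP E$ lies in $\sing X$ precisely when $(w\intprod P)(\hat y^{d-1})=0$ for every $w\in W$, and since $\hat y\in E$ this evaluation only sees $(w\intprod P)|_E$. The directions $w\in E$ contribute nothing because $P|_E=0$, and the directions $w\in\Pi$ contribute nothing because $(w\intprod P)|_E=0$ by the defining property of $\Pi$ from Proposition \ref{tangcx}. The only remaining conditions come from $w_1\hd w_{s_1}$, and by the displayed relations they read
$$
(\a^1)^{j-1}(\a^2)^{s_1-j}(\hat y)\cdot p_1^E(\hat y)=0,\qquad 1\leq j\leq s_1.
$$
Writing $a=\a^1(\hat y)$ and $b=\a^2(\hat y)$, the factors $a^{j-1}b^{s_1-j}$ exhaust the degree-$(s_1-1)$ monomials in $a,b$, so at any $\hat y\neq 0$ at least one of them is nonzero; hence all these conditions hold simultaneously if and only if $p_1^E(\hat y)=0$. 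Thus $\sing X\cap\BP E$ is exactly the zero scheme of $p_1^E$ on $\BP E\cong\pp 1$. As $p_1^E$ has degree $d-s_1\geq 1$ and $K$ is algebraically closed, $p_1^E$ has a root on $\BP E$ (and if $p_1^E\equiv 0$ the entire line is singular); in either case $\BP E\cap\sing X\neq\emptyset$ for general $E$.

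Finally I would upgrade ``general $E$'' to ``all $E\in B$''. The locus $\{E\in B\mid \BP E\cap\sing X\neq\emptyset\}$ is the image under the proper projection $\rho\co\ci_B\to B$ of the closed subset $\ci_B\cap(\sing X\times B)$, hence is closed in $B$; since it contains a dense open subset of the irreducible variety $B$, it must equal $B$. I expect this passage from the general to the special line to be the main obstacle: at a special $E_0$ the normal form \eqref{norformeqn} may split into $r\geq 2$ chains, producing several polynomials $p_1^{E_0}\hd p_r^{E_0}$ whose common zero locus on $\pp 1$ can be empty, so the direct computation of the previous paragraph no longer forces a singular point. It is exactly the closedness of the singular-line locus, together with irreducibility of $B$ and properness of $\rho$, that circumvents this degeneration.
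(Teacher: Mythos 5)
Your proposal is correct and follows essentially the argument the paper intends: the hypothesis $s_1=n-1-\tdim\cC_x=m$ together with $s_1+\cdots+s_r=m$ forces $r=1$, so Lemma \ref{normalform} yields a single form $p_1^E$ of degree $d-s_1\geq 1$ whose zero locus on $\BP E\cong\pp 1$ is exactly $\sing X\cap\BP E$, and this is nonempty over an algebraically closed field. Your closedness/properness argument for passing from general $E$ to all $E\in B$ is a clean substitute for the paper's handling of special lines via the remark after Lemma \ref{normalform} and the limiting-ideal construction of Proposition \ref{psingpropb}, but it amounts to the same idea.
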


\begin{lemma}\label{lemmabelow} Let $X$ be as above and let $E$ be a general point of $B$.
Write $\{\tdeg\ p_k^E: 1\leq k\leq r\}=\{\d_1<\d_2<\cdots<\d_c\}$ and set $i_j=\#\{p_k^E: \tdeg\ p_k^E\leq \d_j\}$ for all $j\leq c$. Note that if $s_r=1$ in the normal form \eqref{norformeqn}, then $i_{c-1}=\#\{k: s_k> 1\}$ and $i_c=r$.
Consider the vector spaces
\begin{align*}
\hat M_1&=M_1:=\langle p^E_1\hd p^E_{i_1}\rangle\subset S^{\d_1}E^*\\
\hat M_2&:=\langle p^E_{i_1+1}\hd p^E_{i_2},\hat M_1\circ S^{\d_2-\d_1}E^*\rangle\subset S^{\d_2}E^*\\
M_2&:= \hat M_2/(\hat M_1\circ S^{\d_2-\d_1}E^*) \subset S^{\d_2}E^*/(\hat M_1\circ S^{\d_2-\d_1}E^*)\\
&\vdots \\
\hat M_{c-1}&:=\langle p^E_{i_{c-2}+1}\hd p^E_{i_{c-1}}, \hat M_{c-2}\circ   S^{\d_{c-1}-\d_{c-2}}E^*\rangle\subset S^{\d_{c-1}}E^*\\
M_{c-1}&:= \hat M_{c-1}/(\hat M_{c-2}\circ   S^{\d_{c-1}-\d_{c-2}}E^*)   \subset S^{\d_{c-1}}E^*/(\hat M_{c-2}\circ   S^{\d_{c-1}-\d_{c-2}}E^*)\\
\hat M_c&:=\langle p^E_{i_{c-1}+1}\hd p^E_{i_c}, \hat M_{c-1}\circ S^{\d_c-\d_{c-1}}E^* \rangle
\subset S^{\d_{c}}E^*\\
M_{c}&:= \hat M_c/(\hat M_{c-1}\circ S^{\d_c-\d_{c-1}}E^*)
\subset S^{\d_{c}}E^*/(\hat M_{c-1}\circ S^{\d_c-\d_{c-1}}E^*)
\end{align*}
These spaces are well defined and depend only on $X,E$.
\end{lemma}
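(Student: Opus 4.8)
The plan is to realize each $\hat M_j$ as a graded piece of an object attached canonically to $X$ and $E$, which makes independence of all the auxiliary choices in the normal form \eqref{norformeqn} manifest. Well-definedness of the quotients is then formal: by construction $\hat M_{j-1}\circ S^{\delta_j-\delta_{j-1}}E^*\subseteq\hat M_j$, so $M_j$ makes sense, and once each $\hat M_j$ is shown to depend only on $X,E$, so does each $M_j$. I would begin with the discrete data. The block sizes $s_1\geq\cdots\geq s_r$ in \eqref{norformeqn}, and hence the degrees $\delta_1<\cdots<\delta_c$ and the indices $i_j$, are invariants (the Kronecker indices) of the constant rank two pencil $L=T_EB/(E^*\ot\Pi)$. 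Since $T_EB=\tker\sxe$ by Proposition~\ref{tanspaceprop} and $\Pi\subseteq W/E$ is determined by $\sxe$ as in Proposition~\ref{tangcx}, the pencil $L$ and therefore all of this discrete data depend only on $X$ and $E$.

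The carrier of the spaces $\hat M_j$ is
\[
V:=\{(w\intprod P)|_E\mid w\in W/E\}\subseteq S^{d-1}E^*,
\]
the image of the contraction map $\psi\co W/E\to S^{d-1}E^*$, $w\mapsto (w\intprod P)|_E$. This map depends only on $E$ and on $P$, and replacing $P$ by a scalar multiple does not change $V$, so $V$ depends only on $X$ and $E$. Reading off the normal form used in the proof of Lemma~\ref{normalform}, the $k$-th chain gives $(w_j\intprod P)|_E=(\alpha^1)^{j-1}(\alpha^2)^{s_k-j}p^E_k$ for $1\leq j\leq s_k$, so the $k$-th block contributes $S^{s_k-1}E^*\circ p^E_k$ and
\[
V=\sum_{k=1}^r S^{s_k-1}E^*\circ p^E_k=I_{d-1},\qquad \tim\sxe=I_d,
\]
where $I=(p^E_1\hd p^E_r)$ is the ideal generated by the $p^E_k$ in the ring $\bigoplus_e S^eE^*$. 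Using that binary multiplication is surjective, $S^aE^*\circ S^bE^*=S^{a+b}E^*$, I would unwind the recursion to the closed form $\hat M_j=\sum_{k\leq i_j}S^{\delta_j-\tdeg p^E_k}E^*\circ p^E_k=I_{\delta_j}$; the generators $p^E_k$ with $k>i_j$ have degree exceeding $\delta_j$ and so do not contribute in degree $\delta_j$. It is essential here to work with $V=I_{d-1}$ rather than with $\tim\sxe=I_d$, since passing to degree $d$ can lose exactly the information needed to separate the $\hat M_j$.

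The key claim would then be the intrinsic description
\[
\hat M_j=\{f\in S^{\delta_j}E^*\mid S^{d-1-\delta_j}E^*\circ f\subseteq V\}.
\]
The inclusion $\subseteq$ is immediate, since for $f\in I_{\delta_j}$ one has $S^{d-1-\delta_j}E^*\circ f\subseteq\sum_k S^{s_k-1}E^*\circ p^E_k=V$. The reverse inclusion is the heart of the matter and is where I expect the main obstacle: it is the saturation statement $(I:\mathfrak m^{\,d-1-\delta_j})_{\delta_j}=I_{\delta_j}$, with $\mathfrak m$ the irrelevant ideal, which is \emph{false} for an arbitrary ideal of these degrees. To force it, I would exploit the special structure of $V$ together with the bounds $d\geq s_1+1$ and $\dim\tim\sxe=m+r\leq d+1$ (the latter because $\tim\sxe\subseteq S^dE^*$): in the binary polynomial ring, which is a unique factorization domain, one factors $g=\gcd(p^E_1\hd p^E_r)$, reduces to the $\mathfrak m$-primary ideal $I/g$, and uses that the osculating origin of the forms $(w_j\intprod P)|_E$ (the Gauss image of $\BP E$, cf.\ the remark after Lemma~\ref{normalform}) keeps $I$ of minimal regularity, so that carrying a degree $\delta_j$ form into $V$ by all of $S^{d-1-\delta_j}E^*$ already forces membership in $I_{\delta_j}$. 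Granting the claim, the right-hand side above manifestly depends only on $V$ and on the intrinsic integer $\delta_j$, hence only on $X$ and $E$, and the same then holds for every $\hat M_j$ and every $M_j$.

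The difficulty is precisely that the saturation identity can fail for ideals that are not of minimal regularity, so the argument must genuinely use the normal-form provenance of the $p^E_k$ and, very likely, the hypothesis that $E$ is general in $B$ (which is what should guarantee the relevant regularity). If a clean regularity estimate proves elusive, the robust fallback is to bypass the intrinsic characterization entirely and instead analyze the only remaining freedom directly: a change of basis $\alpha^1,\alpha^2$ of $E^*$ together with the induced change of the adapted basis $w_1\hd w_m$ of $W/E$. One would track, block by block, how the resulting $p^E_k$ transform under this $GL(E^*)$ action and verify that the spans $\hat M_j$ — and hence the subquotients $M_j$ — are left unchanged.
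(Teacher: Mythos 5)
Your primary argument has a genuine gap at exactly the point you flag, and the paper does not take that route: its entire proof is the one-sentence observation that the lemma ``is an immediate consequence of the uniqueness of the normal form up to admissible changes of bases'' --- i.e.\ precisely the fallback you mention in your last paragraph but do not carry out. (Concretely: any two bases realizing \eqref{norformeqn} differ by a transformation under which each $p^E_k$ is replaced by $\lambda p^E_k+\sum_l q_l\circ p^E_l$, the sum over blocks with $s_l\geq s_k$, i.e.\ $\tdeg p^E_l\leq\tdeg p^E_k$, and such substitutions visibly preserve each span $\hat M_j$.) Your main route instead seeks an intrinsic characterization $\hat M_j=\{f\in S^{\delta_j}E^*\mid S^{d-1-\delta_j}E^*\circ f\subseteq V\}$, where $V$ is the canonical image of $W/E$ in $S^{d-1}E^*$. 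The forward inclusion and the identification $\hat M_j=I_{\delta_j}$ (for the ideal generated by your chosen $p^E_k$) are fine, but the reverse inclusion --- which is the entire content --- is a colon-ideal statement that you explicitly leave unproved. It is not a formality: for an arbitrary collection $(p_k,s_k)$ with $V=\sum_k S^{s_k-1}E^*\circ p_k$ and the stated degree constraints it is false (take $d=5$, $p_1=x^2$, $p_2=y^3$: then $V=S^4E^*$ while the colon space in degree $2$ is all of $S^2E^*$, not $\langle x^2\rangle$). Such configurations are ruled out only because the $s_k$ must be the actual Kronecker indices of $\tker\sxe/(E^*\ot\Pi)$, so any proof of your identity must invoke that provenance in an essential way; the gcd/``minimal regularity'' mechanism you gesture at is not supplied and it is not clear it can be made to work.

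What is correct and worth keeping from your writeup: the discrete data ($s_k$, $r$, $\delta_j$, $i_j$) are indeed intrinsic, being the Kronecker minimal indices of the constant-rank-two pencil $\tker\sxe/(E^*\ot\Pi)$, which depends only on $X$ and $E$ by Propositions~\ref{tanspaceprop} and~\ref{tangcx}; and the reduction of the $M_j$ to the $\hat M_j$ is formal. To close the argument, abandon the saturation claim and execute your own fallback: classify the admissible changes of basis preserving the Kronecker normal form (uniqueness of that form guarantees these are the only ambiguities) and check, block by block, that they act on the $p^E_k$ by the triangular substitutions described above, hence fix every $\hat M_j$. That is the paper's proof.
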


The lemma is an immediate consequence of the uniqueness of the normal form up to admissible
changes of bases.
Let $I_E\subset Sym(E^*)$ denote the ideal generated by the $\hat M_j$.
Note that the number of polynomials generating $I_E$ is at most   $ \tdim \cC_x+1$, independent of the
normal form (and $ \tdim \cC_x+1$ is the expected number of generators).
Let $B'\subset B$ denote the Zariski open subset where the normal form
is the same as that of a general point.

\begin{proposition}\label{psingprop}
Let $E\in B'$ and let  $[y]\in \BP E$ be in the zero set of $I_E$, then $[y]\in \sing X$.
\end{proposition}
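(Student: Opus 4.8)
The plan is to verify directly that the differential of $P$ vanishes at $y$, by checking that $(w\intprod P)(y)=0$ for $w$ running over a basis of $W$ adapted to the normal form. Since $\BP E\subset X$ we have $P|_E=0$, so in particular $P(y)=0$ and $[y]\in X$; by the Jacobian criterion for the hypersurface $X=V(P)$ it then remains to show that every first-order partial derivative of $P$ vanishes at $y$, i.e. that $(w\intprod P)(y)=0$ for all $w\in W$.

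First I would fix a basis of $W$ of the shape $e_1,e_2,w_1\hd w_{n-1}$, where $e_1,e_2$ is a basis of $E$, the classes $w_1\hd w_m$ in $W/E$ are adapted to the normal form \eqref{norformeqn}, and $w_{m+1}\hd w_{n-1}$ span $\Pi$, exactly as in Lemma~\ref{normalform}. I would then treat the three types of basis vectors separately. For $e\in E$, the restriction satisfies $(e\intprod P)|_E=e\intprod(P|_E)=0$ because $P|_E=0$, and hence $(e\intprod P)(y)=0$ since $y\in\hat E$. For $w\in\Pi$, the defining property $E^*\ot\Pi\subset\tker\sxe$ gives $\sxe(\a\ot w)=\a\circ(w\intprod P)|_E=0$ for all $\a\in E^*$; as the symmetric algebra of $E^*$ is an integral domain, multiplication by a nonzero $\a$ is injective, so $(w\intprod P)|_E=0$ and again $(w\intprod P)(y)=0$.

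The essential case is $w=w_i$ with $1\le i\le m$. Here I would invoke the chain relations established in the proof of Lemma~\ref{normalform}: writing $w_i$ as the $\iota$-th vector of the $k$-th chain, one has $(w_i\intprod P)|_E=(\a^1)^{\iota-1}(\a^2)^{s_k-\iota}\circ p_k^E$. Evaluating at $y\in\hat E$, and using that a symmetric product evaluates as the product of the evaluations of its factors, this equals $\a^1(y)^{\iota-1}\a^2(y)^{s_k-\iota}\,p_k^E(y)$. The key preliminary observation, which I would record first, is that the ideal $I_E$ generated by the spaces $\hat M_j$ coincides with the ideal $(p_1^E\hd p_r^E)$, since each contribution of the form $\hat M_{j-1}\circ S^{\bullet}E^*$ already lies in the ideal generated by the lower $p_k^E$. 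Thus the hypothesis that $[y]$ lies in the zero set of $I_E$ yields $p_k^E(y)=0$ for every $k$, and therefore $(w_i\intprod P)(y)=0$.

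Combining the three cases, $(w\intprod P)(y)=0$ for all $w$ in the chosen basis of $W$, so $dP_y=0$ and $[y]\in\sing X$. I do not expect a genuine obstacle: once the basis is organized into the three blocks $E$, $\langle w_1\hd w_m\rangle$, and $\Pi$, the argument is a direct verification. The only points requiring care are the bookkeeping that identifies $I_E$ with $(p_1^E\hd p_r^E)$ and the observation that the evaluation of the symmetric products at $y$ factors through the scalar $p_k^E(y)$. The hypothesis $E\in B'$ is used solely to guarantee that the normal form, and hence the $p_k^E$ and the spaces $\hat M_j$, are the generic ones, so that Lemmas~\ref{normalform} and~\ref{lemmabelow} apply verbatim.
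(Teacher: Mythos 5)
Your proof is correct and follows essentially the same route as the paper's: reduce to checking $(w\intprod P)(y)=0$ for $w$ in a basis adapted to the normal form, use $(u\intprod P)|_E=0$ for $u\in E$ and $(w\intprod P)|_E=0$ for $w\in\Pi$, and observe that each remaining $(w_i\intprod P)|_E$ has some $p_j^E$ as a factor by the chain relations of Lemma~\ref{normalform}. Your explicit identification of $I_E$ with the ideal $(p_1^E\hd p_r^E)$ is a correct bookkeeping step that the paper leaves implicit.
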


\begin{proof}
$[y]\in \sing X$ says that for all $w\in W$, $(w\intprod P)(y)=0$. 
Let $w_1,\dots,w_{n-1}$ be elements of $W$ that descend to give a basis of $W/E$. 
Since $(u\intprod P)|_E=0$ holds for all $u\in E$, the polynomial $(w\intprod P)|_E\in S^{d-1}E^*$ is a linear combination of the $(w_i\intprod P)|_E$. As each $(w_i\intprod P)|_E$ contains one of the $p^E_j$ as a factor, the hypothesis implies $w\intprod P$ vanishes at $y$.
\end{proof}

\medskip

We now     allow $E$ to vary. 
Let $\cS\ra G(2,W)$ denote  the tautological rank two subspace
bundle and note  that the total space of $\BP \cS|_B$ is our incidence correspondence $\ci_B$.
Since all calculations are algebraic,
$M_1$ gives rise to a rank $i_1$ algebraic vector bundle $\cM_1\subset S^{\d_1}\cS^*|_{B'}$,
and $M_2$ gives rise to a rank $i_2-i_1$ algebraic vector bundle $\cM_2\subset ((S^{\d_2}\cS^*)/(\cM_1\circ S^{\d_2-\d_1}\cS^*))|_{B'}$, etc\dots  finally giving a bundle of ideals $\ci\subset Sym(\cS^*)|_{B'}$.

Now, since Grassmannians are compact, along any curve $E_t$ in $B$
with $E_t\in B'$ for $t\neq 0$, we have well defined limits as $t\ra 0$, and
thus we may define $\bold I^{E_t}_0 \subset Sym(E_0^*)$. Note that if
we approach $E_0$ in different ways, we could obtain different limiting ideals,
nevertheless we  have:

\begin{proposition}\label{psingpropb}
Let $E\in B$ and let  $[y]\in \BP E$ be in the zero set of $\bold I^{E_t}_0$, then $[y]\in \sing X$.
\end{proposition}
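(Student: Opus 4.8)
The plan is to reduce Proposition~\ref{psingpropb} to the computation already used for Proposition~\ref{psingprop} by showing that the polynomials that detect the singular locus survive in the limit. Recall that $[y]\in\BP E$ is singular on $X$ exactly when $(w\intprod P)|_E$ vanishes at $y$ for every $w$ descending to a basis of $W/E$ (the contributions from $u\in E$ vanish automatically since $P|_E=0$), and that by the chain structure in the proof of Lemma~\ref{normalform} each such $(w_i\intprod P)|_E$ is a monomial in $\a^1,\a^2$ times one of the generators $p^E_j$ of $I_E$. So it suffices to prove that for a basis $w^0_1\hd w^0_{n-1}$ of $W/E_0$, each $(w^0_i\intprod P)|_{E_0}$ lies in the limit ideal $\bold I^{E_t}_0$. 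Granting this, any $[y]\in\BP E_0$ in the zero set of $\bold I^{E_t}_0$ kills every generator, hence kills $(w^0_i\intprod P)|_{E_0}$ for all $i$, and the argument of Proposition~\ref{psingprop} then gives $(w\intprod P)(y)=0$ for all $w\in W$, i.e. $[y]\in\sing X$.

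To obtain this membership I would fix the curve $E_t$ defining $\bold I^{E_t}_0$, work over a small disk (or a DVR) on which $\cS$ and $W/\cS$ trivialize, and choose algebraic frames $e^t_0,e^t_1$ of $E_t$ with dual coframe $\a^1_t,\a^2_t$, together with lifts $w^t_1\hd w^t_{n-1}$ of a basis of $W/E_t$, all converging as $t\to0$ to frames at $E_0$; for $t\neq0$ we take the $w^t_i$ adapted to the normal form \eqref{norformeqn}. For $t\neq0$ the proof of Lemma~\ref{normalform} then yields honest polynomial identities $(w^t_i\intprod P)|_{E_t}=m^t_i\,p^t_{j(i)}$, where $m^t_i=(\a^1_t)^a(\a^2_t)^b$ is a monomial and $p^t_{j(i)}$ is one of the generators of the corresponding fiber $\hat M^{E_t}_{k}\subset S^{\d_k}E_t^*$.

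The remaining step is to pass to the limit. The monomials converge, $m^t_i\to m^0_i$, and $m^0_i$ is a nonzero element of $Sym(E_0^*)$ since the $\a^\bullet_0$ form a basis; likewise $(w^t_i\intprod P)|_{E_t}\to(w^0_i\intprod P)|_{E_0}$. Normalizing $p^t_{j(i)}$ to unit length in a fixed norm on $S^{\d_k}\cS^*$ and passing to a subsequence, they converge to a nonzero element of the Grassmannian limit $\hat M^0_k=\lim_{t\to0}\hat M^{E_t}_k$ (this limit is unique by the valuative criterion, Grassmannians being proper); tracking the normalizing scalars, which converge because the other factors do, the identity passes to the limit and exhibits $(w^0_i\intprod P)|_{E_0}$ as $m^0_i$ times an element of $\hat M^0_k\subset\bold I^{E_t}_0$. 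Hence $(w^0_i\intprod P)|_{E_0}\in\bold I^{E_t}_0$, as needed.

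I expect this last limiting step to be the main obstacle, and it is also why a purely soft argument fails: the naive route — noting $V(I_{E_t})\cap\BP E_t\subset\sing X$ for $t\neq0$ by Proposition~\ref{psingprop} and invoking that $\sing X$ is closed — only gives $\lim_{t\to0}\big(V(I_{E_t})\cap\BP E_t\big)\subset\sing X$, and this limit can be strictly smaller than $V(\bold I^{E_t}_0)\cap\BP E_0$, because the degree $d-1$ part of the ideal may drop rank in the limit so that $\lim_t(I_{E_t})_{d-1}$ properly contains $(\bold I^{E_t}_0)_{d-1}$. Controlling this forces one to track the explicit monomial divisibility rather than abstract ideal membership, and the one point needing care is the convergence (up to subsequence and rescaling) of the individual generators $p^t_{j(i)}$ into the limit subspaces $\hat M^0_k$, which is available precisely because each $(w^t_i\intprod P)|_{E_t}$ is divisible by a single such generator with a monomial quotient that remains nonzero in the limit.
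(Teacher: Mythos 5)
Your overall strategy --- tracking the factorizations $(w^t_i\intprod P)|_{E_t}=m^t_i\,p^t_{j(i)}$ along the curve and passing to the limit using properness of the Grassmannian --- is the same as the paper's, and the step you flag as the main obstacle (convergence of the rescaled generators $p^t_{j(i)}$ into the limit subspaces $\hat M^0_k$) does work essentially as you describe. The genuine gap is earlier: you assume the $w^t_i$ can be chosen simultaneously adapted to the normal form \eqref{norformeqn} for $t\neq 0$ \emph{and} converging to a frame, i.e.\ to vectors $w^0_1\hd w^0_{n-1}$ that descend to a basis of $W/E_0$. This is exactly what the paper warns cannot be assumed (``In the limit, we may not assume that $w^0_1\hd w^0_m$ are linearly independent''): at $E_0\in B\setminus B'$ the normal form changes, and bases adapted to the generic normal form are in general forced to degenerate. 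If a relation such as $\l^1w^0_1+\cdots+\l^4w^0_4=0$ holds, your limiting identities only control $(w\intprod P)|_{E_0}$ for $w$ in the proper subspace of $W/E_0$ spanned by the $w^0_i$, so you cannot conclude $(w\intprod P)(y)=0$ for \emph{all} $w\in W$. Nor can you escape by choosing $w^t_i$ that do converge to a frame: then the single-generator divisibility is lost, and you are back in the soft situation that you yourself correctly rule out in your last paragraph.

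The paper's proof is devoted precisely to this degenerate case. It shows that a relation among the $w^0_i$ forces a corresponding relation $\psi_0=\mu\phi_0$ among the limiting generators (so the dependence is not arbitrary), and then differentiates the moving frame along the curve to obtain $(z\intprod P)|_{E_0}=(\a^1+\mu\a^2)(\l^2{\phi_0}'+\l^4{\psi_0}')$ for the missing direction $z=\l^1{w^0_1}'+\cdots+\l^4{w^0_4}'$; the factor $\l^2{\phi_0}'+\l^4{\psi_0}'$ is proportional to ${\psi_0}'-\mu{\phi_0}'$, which lies in the limiting plane $\tlim_{t\ra 0}[\phi_t\ww\psi_t]=[\phi_0\ww({\psi_0}'-\mu{\phi_0}')]$ and hence in $\bold I^{E_t}_0$. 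This derivative step, which recovers the directions lost to the degeneration of the adapted basis, is the essential content of the proposition and is absent from your argument.
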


\begin{proof} Although this is a standard argument, we give   details
in a special case to show that
at   points of $B\backslash B'$ the situation is even more favorable. 
Work locally in a coordinate patch. First note that we may choose
a fixed $\a^1,\a^2\in W^*$ that restrict to a basis of $E^*$ for
all $E$ in our coordinate patch and still obtain the normal form
by linear changes of bases in $W/E$. So along our curve $E_t$ we
consider $\a^1,\a^2$ and $w^t_1\hd w^t_{n-1}$ such that for $t\neq 0$
(and small), $\Pi=\langle  w^t_{m+1}\hd w^t_{n-1}\rangle $ and we have a fixed normal form,
e.g.,  say
$\a^1\ot w^t_1-\a^2\ot w^t_2,\a^1\ot w^t_3-\a^2\ot w^t_4\in \tker\s_t$ for
all small $t$, giving rise to polynomials $\phi_t,\psi_t$ such that
$$
w^t_1\intprod P|_{E_t}=\a^2\circ \phi_t,\ \ 
w^t_2\intprod P|_{E_t}=\a^1\circ \phi_t,\ \ 
w^t_3\intprod P|_{E_t}=\a^2\circ \psi_t,\ \ 
w^t_4\intprod P|_{E_t}=\a^1\circ \psi_t.
$$  
In the limit, we may not assume that
$w^0_1\hd w^0_m$ are linearly independent.

First notice that if $\psi_0=\mu \phi_0$, then
although we have a well defined plane $\tlim_{t\ra 0}[\phi_t\ww \psi_t]$ (which equals
$[\phi_0\ww(\psi_0'-\mu\phi_0')]$ if $\phi_0\ww(\psi_0'-\mu\phi_0')\neq 0$), the vanishing of $\phi_0$
already implies $[y]\in \sing X$, as long as $w^0_1\hd w^0_4$ 
are linearly independent.

Now consider the case we have a relation $\l^1w^0_1+\cdots + \l^4w^0_4=0$. 
This implies we have a relation
\begin{align*}
0&=\l_1\a^2\circ \phi_0+\l^2\a^1\circ \phi_0+\l^3\a^2\circ \psi_0+\l^4\a^1\circ \psi_0\\
&=\a^1\circ (\l^2\phi_0+\l^4\psi_0)+\a^2\circ (\l^1\phi_0+\l^4\psi_0)
\end{align*}
Which implies (assuming all coefficients nonzero)
$\psi_0=\mu\phi_0$ with $\mu=-\l^2/\l^4=-\l^1/\l^3$.
In particular, the relation  among the $w^0_j$  was not arbitrary.
We also see that
$$
(\l^1{w^0_1}'+\cdots +\l^4{w^0_4}')\intprod P|_{E_0}=(\a^1+\mu\a^2)(\l^2{\phi_0}'+\l^4{\psi_0}')
$$
That is, assuming $z:=(\l^1{w^0_1}'+\cdots +\l^4{w^0_4}')$ is linearly independent
of $w^0_1\hd w^0_4$, we obtain that    $\bold I^{E_t}_0$
includes $z\intprod P|_{E_0}$. Otherwise, just differentiate further.
\end{proof}

We would like to work with vector bundles over our entire space, which can be achieved by
considering the product of Grassmann bundles $ G(\trank \hat M_1,S^{\d_1}\cS^*)
\ctimes G(\trank \hat M_c,S^{\d_c}\cS^*)\ra B$.
Over $B'\subset B$ we have a well defined section of this bundle. Using the compactness
of the Grassmannian and the limiting procedure described above, we
extend these sections to obtain a space $\tau :\cB\ra B$, with fiber over points of $B'$
a single point. 
Thus each $M_j$ (resp. $\hat M_j$) gives rise to a well defined vector bundle $\bold M_j\ra \cB$
(resp. $\hat{\bold M}_j\ra \cB$), and
we have the corresponding bundle of ideals $\bold I\subset \tau^*(Sym(\cS^*))$.

Let $\bold S=\t^*(\cS)$ and
  $\cO_{\BP(\bold S)}(\d )=\tilde \t^*(\cO_{\BP \cS}(\d))$, 
where
$\tilde\t: \bold S\ra \cS$ is the lift of $\t$.   Consider
the projection  $q: \BP(\bold S)\ra \cB$ and
the bundles 
$$   q^*({\bold M_j})^*\ot \cO_{\BP(\bold S)}(\d_j) 
$$
Then $q^*({\bold M_1})^*\ot \cO_{\BP(\bold S)}(\d_1)= q^*(\hat{\bold M}_1)^*\ot \cO_{\BP(\bold S)}(\d_1)$ has a canonical section $\bold s_1$ whose zero set
$Z_1\subset \BP(\bold S)$
is the zero set of $(\bold I)_{\d_1}$.
For each $2\leq j\leq c$,  
   the corresponding bundle $\htmi$,  
  has a canonical section $ \hat {\bold s}_j$, whose zero set
$Z_j\subset \BP(\bold S)$ is the zero set of $(\bold I)_{\d_j}$.

Fix a general point $x\in X_B$, let $\bcx=\t\inv(\cC_x) \subset \cB$.
The essential observation is that 
$\tdim\tilde\cC_x\geq r = \sum_j\trank \bold M_j$,
so we expect $Z_{c}\cap q\inv ({\bold C_x})$ to be nonempty.
This would imply the existence of singular points, because the image 
of $Z_{c}$ in $X_B$ is contained in $\sing X$.

In more detail, 
we have a sequence of vector bundles $\tmii 1\hd \tmii c$ over $\BP (\bold S)$,
whose ranks add up to $r$, such that $\tmii 1$ is equipped
with a canonical section $\bold s_1$, and restricted to its zero set $Z_1$,
$\tmii 2$ has a canonical section $\bold s_2$, etc... such that if everything were to work
out as expected, the zero set $Z_c$ of $\bold s_c$, which is defined as a section of
$\tmii c$ over $Z_{c-1}$, would have codimension $r$, which is the dimension of
$\BP (\bold S)|_{\bold C_x}$.
Thus we expect $Z_c\cap  \BP (\bold S)|_{\bold C_x}\neq\emptyset$,
which would imply that $\tilde\cC_x\cap \sing X\neq\emptyset$.
Note that a sufficient condition for this is
\begin{equation}\label{stareqn}
c_{\ttop}(\tmii 1)\cdot c_{\ttop}(\tmii 2)\cdots c_{\ttop}(\tmii c)\neq 0,
\end{equation}
where the intersection takes place in the Chow group of codimension $r$ cycles on $\BP(\bold S)|_{\bold C_x}$.
 
We were not able to prove this in general, but we are able to show:

\begin{theorem}\label{m1lemma}
The zero set of the canonical section of 
$\hat {\bold M}_1^*\ot \t^*(\cO_{\bpwx}(\d_1))|_{\bold C_x}$ is always at least of the expected dimension.
\end{theorem}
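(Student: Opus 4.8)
The plan is to realize the section in the statement as the evaluation of the forms cutting out $\hat{\bold M}_1$ and to reduce the assertion to the nonvanishing of a single top Chern class, which I would then establish by positivity. Set $P:=\BP(\bold S)|_{\bcx}$ and $\cE:=q^*(\hat{\bold M}_1)^*\ot\cO_{\BP(\bold S)}(\d_1)$, a bundle of rank $i_1$ on $P$. The tautological inclusion $\cO_{\BP(\bold S)}(-1)\hookrightarrow q^*\bold S$ induces a canonical surjection $q^*S^{\d_1}\bold S^*\to\cO_{\BP(\bold S)}(\d_1)$, namely evaluation of a binary form at the marked point of $\BP E$; restricting this to $q^*\hat{\bold M}_1\subset q^*S^{\d_1}\bold S^*$ gives exactly the canonical section $\bold s_1$ of $\cE$. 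By construction its zero locus consists of the pairs $(E,[y])$ with $y\in\BP E$ at which every generator $p^E_k$ of $\hat{\bold M}_1$ vanishes, and by Proposition~\ref{psingprop} (and Proposition~\ref{psingpropb} along $B\backslash B'$) these are precisely the points of $\sing X\cap\tilde\cC_x$. Since $\tdim P=\tdim\cC_x+1$ and $i_1\le r\le\tdim\cC_x+1$, the expected dimension $\tdim P-i_1$ is nonnegative; as the zero locus of a section of a rank $i_1$ bundle, any nonempty such locus automatically has every component of codimension at most $i_1$, so the statement reduces to showing $Z(\bold s_1)\neq\emptyset$, equivalently $c_{i_1}(\cE)\neq 0$ in the Chow group of $P$. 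This is the $c=1$ instance of the sufficient condition \eqref{stareqn}.

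To access $c_{i_1}(\cE)$ I would exploit two sources of positivity on $P$. First, the natural map $\pi_y\co P\to\tilde\cC_x\subset\BP W$, $(E,[y])\mapsto[y]$, satisfies $\cO_{\BP(\bold S)}(1)=\pi_y^*\cO_{\BP W}(1)$ and is birational onto the cone $\tilde\cC_x$, whose dimension equals $\tdim\cC_x+1=\tdim P$; hence $\ze:=c_1(\cO_{\BP(\bold S)}(1))$ is big and nef with $\int_P\ze^{\tdim P}=\tdeg\tilde\cC_x>0$. Writing $a_1\hd a_{i_1}$ for the Chern roots of $q^*(\hat{\bold M}_1)^*$, one has $c_{i_1}(\cE)=\prod_j(\d_1\ze+a_j)=\sum_{m=0}^{i_1}(\d_1)^{i_1-m}\ze^{\,i_1-m}\,q^*c_m((\hat{\bold M}_1)^*)$, and the plan is to show $\int_P\ze^{\tdim P-i_1}c_{i_1}(\cE)>0$, which forces $c_{i_1}(\cE)\neq 0$. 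I emphasize that no fibrewise argument can succeed: for general $E$ the $i_1$ binary forms $p^E_k$ of degree $\d_1$ have no common zero on $\BP E$ once $i_1\ge 2$, so $Z(\bold s_1)$ meets the generic fibre of $q$ in the empty set, and its nonemptiness is a genuinely global phenomenon detected only by the top Chern number.

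The strict positivity is where Lemma~\ref{genamplelemma} enters. Since $\hat{\bold M}_1\subset S^{\d_1}\cS^*$, the bundle $\cE$ is a quotient of $q^*S^{\d_1}\bold S\ot\cO_{\BP(\bold S)}(\d_1)$, and Lemma~\ref{genamplelemma} is meant to supply the nefness of the relevant twisted symmetric powers of $\bold S$, making each intersection number $\int_P\ze^{\tdim P-m}q^*c_m((\hat{\bold M}_1)^*)$ nonnegative. The task is then to promote nonnegativity to strict positivity using the $m=0$ term $(\d_1)^{i_1}\int_P\ze^{\tdim P}=(\d_1)^{i_1}\tdeg\tilde\cC_x$, which is strictly positive because $\d_1\ge 1$ and $\ze$ is big: if all summands share one sign they cannot cancel. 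Finally, along $B\backslash B'$ the section is produced by the limiting procedure defining $\cB$, and Proposition~\ref{psingpropb} guarantees its zeros still lie in $\sing X$, so the extended $\bold s_1$ is the honest evaluation section and the computation is unaffected.

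The main obstacle is exactly this last sign control: bounding the mixed terms $\int_P\ze^{\tdim P-m}q^*c_m((\hat{\bold M}_1)^*)$ for $0<m\le i_1$. The subbundle $\hat{\bold M}_1\subset S^{\d_1}\cS^*$ need not be nef once the $\cO(\d_1)$-twist is forgotten, so arranging that these numbers do not acquire a sign opposite to the leading term --- which is precisely what Lemma~\ref{genamplelemma} must be engineered to deliver --- is the crux. It is here that restricting to the single bundle $\hat{\bold M}_1$, rather than the full product \eqref{stareqn}, is essential: one need only control the positivity of one evaluation bundle, instead of the compatibility of an entire chain $\tmii 1\hd\tmii c$.
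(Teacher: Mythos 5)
Your reduction of the theorem to the nonemptiness of $Z(\bold s_1)$ is correct, and sufficiency of $c_{\ttop}\neq 0$ is the right target; but the proof stalls exactly where you say it does, and that missing step is the entire content of the theorem. Expanding $c_{i_1}(q^*\hat{\bold M}_1^*\ot\cO_{\BP(\bold S)}(\d_1))=\sum_m \d_1^{i_1-m}\ze^{i_1-m}q^*c_m(\hat{\bold M}_1^*)$ and trying to control the sign of each $\int \ze^{\tdim P-m}q^*c_m(\hat{\bold M}_1^*)$ cannot be made to work from the stated hypotheses: Lemma~\ref{genamplelemma} does \emph{not} assert nefness of twisted symmetric powers of $\bold S$, nor any positivity of the Chern classes of $\hat{\bold M}_1^*$ by itself. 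What it gives is generic ampleness of the bundle $\hat{\bold M}_1^*\ot\t^*(\cO_{\bpwx}(\d_1))$ \emph{on the base} $\bold C_x$, i.e.\ twisted by $\cO_{\bold C_x}(\d_1)=\t^*\cO_{\bpwx}(\d_1)$ rather than by $\cO_{\BP(\bold S)}(\d_1)$; the untwisted $q^*c_m(\hat{\bold M}_1^*)$ terms have no a priori sign, so ``if all summands share one sign they cannot cancel'' is a hope, not an argument.

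The paper converts that base positivity into positivity on $P=\BP(\bold S)|_{\bold C_x}$ by a geometric device you do not use: a hyperplane $W'\not\ni\hat x$ cuts the cone $\tilde\cC_x$ in a copy $H$ of $\cC_x$ avoiding the vertex, the projection $q$ restricts to an isomorphism $H\cong\bold C_x$ under which $\cO_{\BP(\bold S)}(1)|_H\cong\cO_{\bold C_x}(1)$, and hence $\bold s_1|_H$ becomes a global section of the generically ample bundle of Lemma~\ref{genamplelemma}, whose positive top Chern class forces $Z(\bold s_1)\cap H\neq\emptyset$ whenever $\trank\hat{\bold M}_1\le\tdim\bold C_x$ (Lemma~\ref{hitssubbundleslemma}). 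This already disposes of all but the extremal case $\trank\hat{\bold M}_1=\tdim\bold C_x+1$, which is genuinely harder and which your leading term $\d_1^{i_1}\tdeg\tilde\cC_x$ does not reach (there is no $\ze$ left to cut with). There the paper passes to the corank-one subbundle $N\subset\hat{\bold M}_1$ of forms vanishing at the vertex $x$, applies the hyperplane argument to $N$, restricts to a curve $\xi$ to land on a ruled surface containing the vertex section $\xi_0$ of negative self-intersection, and concludes by the elementary intersection computation of Lemma~\ref{itcone} applied to a component $Z$ of $Z(s_N)$ and a divisor representing $c_1(q^*L^*\ot\cO(\d_1))$ for the quotient line bundle $L=\hat{\bold M}_1/N$. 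None of this machinery is present in your proposal, so the argument as written establishes only the easy bookkeeping around the theorem, not the theorem itself.
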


Another natural case to consider is the case where the $\bold M_j$ are all line bundles. 
For instance, consider the 
even further special case where there is just $\bold M_1,\bold M_2$ and both are line bundles.
This case splits into two sub-cases, based on whether or not the zero section of $\bold s_1$ surjects onto
all of $X_B$ or not. 
In \S 6, we show that if $Z(\bold s_1)$ fails to surject onto  $X_B$, 
then Conjecture \ref{mainthm}  indeed holds.

Since   $\tmi$ only has a section defined over $Z_{j-1}$, it will
be more convenient to work with the bundles 
$q^*(\hat{\bold M}_j)^*\ot \cO_{\BP(\bold S)}(\d_j)$ which have everywhere defined sections $\hat{\bold s}_j$.

The best situation for proving results about sections of bundles is
when the bundles are ample, which fails here. However,
below we show that if $x$ is sufficiently general, the bundles  $\hat {\bold M}_j^*\ot \t^*(\cO_{\bpwx}(\d_j))|_{\bold C_x}$
are \emph{generically ample}.

\section{Generic ampleness}
 
Recall (\cite{Fulton}, Example 12.1.10) that a vector bundle $\cE$ over a variety $X$ is {\it generically
ample} if it is generated by global sections and the
canonical map $\BP \cE^*\ra \BP (H^0(X,\cE)^*)$ is generically finite.
The locus where it is not finite is called the {\it disamplitude locus} $\Damp(\cE)$.
In particular, if $Y\subset X$ is a subvariety such that
$\cE|_Y$ has a trivial quotient sub-bundle, then $Y\subset \Damp(\cE)$.

Generically ample bundles of rank $r\leq\tdim X$ 
have the property
that $c_1(\cE)\hd c_r(\cE)$ are all positive, in the sense that
their classes in the Chow group of $X$ are linear combinations of effective classes with nonnegative coefficients, not all equal to $0$.

\begin{lemma}\label{genamplelemma} For
general $x\in X_{B}$, the bundles 
 $\hat {\bold M}_j^*\ot \t^*(\cO_{\bpwx}(\d_j))|_{\bold C_x}$ are generically ample.
\end{lemma}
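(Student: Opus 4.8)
The plan is to verify the two defining properties of generic ampleness for
$\cE_j:=\hat{\bold M}_j^*\ot\t^*(\cO_{\bpwx}(\d_j))|_{\bcx}$, namely global generation together with generic finiteness of the induced map to $\BP(H^0(\cE_j)^*)$. Working over $B'$, where $\t$ is an isomorphism, I identify $\bcx$ with $\cC_x$. Projection from the vertex $x$ realizes $\cC_x$ as a \emph{closed} subvariety of $\bpwx$, via $E\mapsto[E/\hat x]$; writing $L:=\cO_{\bpwx}(-1)|_{\cC_x}$, this makes $\t^*(\cO_{\bpwx}(\d_j))|_{\bcx}=L^{-\d_j}$ ample. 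Since $\hat x$ is a fixed vector it spans a trivial subbundle of $\cS|_{\cC_x}$, giving $0\to\cO\cdot\hat x\to\cS|_{\cC_x}\to L\to 0$, whence a filtration of $S^{\d_j}\cS^*|_{\cC_x}$ with graded pieces $\cO_{\bpwx}(k)|_{\cC_x}$, $0\le k\le\d_j$. As $\hat{\bold M}_j\subset S^{\d_j}\cS^*$, the bundle $\cE_j$ is a quotient of $S^{\d_j}\cS\ot L^{-\d_j}$ and inherits a filtration whose graded pieces lie among $\cO,\cO_{\bpwx}(1)|_{\cC_x},\dots,\cO_{\bpwx}(\d_j)|_{\cC_x}$.

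For global generation I would note that $(\cS\ot L^{-1})|_{\cC_x}$ is the restriction of $\cS\ot\cO_{\bpwx}(1)$ on the sub-Grassmannian of two-planes containing $\hat x$, which is $\bpwx$. There the sequence $0\to\cO_{\bpwx}(1)\to\cS\ot\cO_{\bpwx}(1)\to\cO\to0$ recording that each plane contains $\hat x$ splits, since $H^1(\bpwx,\cO_{\bpwx}(1))=0$; thus $\cS\ot\cO_{\bpwx}(1)\cong\cO\op\cO_{\bpwx}(1)$ is globally generated. Restriction to $\cC_x$, the symmetric power $S^{\d_j}$, and passage to the quotient all preserve global generation, so $\cE_j$ is globally generated. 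This step is routine.

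The heart of the matter is generic finiteness, equivalently $\Damp(\cE_j)\ne\cC_x$. Because $\cC_x\hookrightarrow\bpwx$ is a closed embedding, every graded piece $\cO_{\bpwx}(k)|_{\cC_x}$ with $k\ge1$ is ample, and only the trivial piece $\cO$—which occurs exactly when $\hat{\bold M}_j$ contains the rank-one subbundle $L^{-\d_j}$ spanned by $(E/\hat x)^{*\,\ot\d_j}$—can obstruct positivity. I would exclude this over a dense open $U\subset\cC_x$ as follows. A trivial quotient of $\cE_j|_U$ dualizes to a nowhere-zero section of $\cE_j^*|_U=\hat{\bold M}_j\ot L^{\d_j}\subset S^{\d_j}\cS^*\ot L^{\d_j}$; since all graded pieces of the ambient bundle other than the sub $\cO$ are anti-ample and so have no sections, any such section is a nowhere-vanishing family $q^E\in\hat M_j$ with $q^E=(\ell^E_x)^{\d_j}$, where $\ell^E_x\in E^*$ is the form vanishing at $x$. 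Granting the absence of such a family over $U$, the filtration of $\cE_j|_U$ has only ample graded pieces, and a Segre-class computation then shows $\cO_{\BP\cE_j^*}(1)$ is big over $U$; with global generation this makes the map to $\BP(H^0(\cE_j)^*)$ generically finite.

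The main obstacle is precisely ruling out the degenerate family $q^E=(\ell^E_x)^{\d_j}$ for general $x$. Algebraically this says some generator of the $j$-th graded piece of $I_E$ vanishes only at $x$, to order $\d_j$, for all lines $E$ in a dense subset of $\cC_x$; geometrically it forces the corresponding component of the Gauss image of $\BP E$ to collapse to the point dual to $x$, which is incompatible with $x$ being a general smooth point of $X_B$ and with the lines of $B$ through $x$ sweeping the genuinely $(\tdim\cC_x+1)$-dimensional cone $\tilde\cC_x\not\subset\sing X$. Making this incompatibility precise, ideally by a dimension count on the incidence of pairs $(x,E)$ for which $q^E$ is a pure power and using that $x$ varies in the positive-dimensional $X_B$, is the step I expect to require the most care.
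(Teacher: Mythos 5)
Your reduction follows the same route as the paper's proof: you split off the trivial line spanned by $\hat x$ inside $\cS|_{\cC_x}$, filter $S^{\d_j}\cS\ot\cO_{\bpwx}(\d_j)$ with graded pieces $\cO_{\bpwx}(k)|_{\cC_x}$ for $0\le k\le\d_j$, observe that global generation passes to the quotient $\hat{\bold M}_j^*\ot\cO_{\bpwx}(\d_j)$, and correctly identify the only possible source of disamplitude as the locus of lines $E$ for which $\hat{\bold M}_{j,E}$ contains a nonzero form vanishing at $x$ with multiplicity $\d_j$, i.e.\ a multiple of $(\ell^E_x)^{\d_j}$. (The paper records one further contribution that you suppress by identifying $\bcx$ with $\cC_x$ over $B'$, namely the locus where $\t\co\bcx\ra\cC_x$ has positive-dimensional fibers; this is harmless because that map is generically finite for general $x$, but it should be mentioned since the lemma is a statement about bundles on $\bcx$.) Up to this point the argument is sound and is essentially the paper's.

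The genuine gap is the step you explicitly defer: ruling out, for general $x$ and general $E\in\cC_x$, the existence of a nonzero $q^E\in\hat{\bold M}_{j,E}$ with $q^E=c\,(\ell^E_x)^{\d_j}$. Your proposed route --- a Gauss-map collapsing heuristic plus an unperformed dimension count --- is not a proof as written, and it is far more machinery than the statement needs. The paper's Lemma~\ref{notallP} closes this in two lines: for a \emph{fixed} line $E$, the classes $[P]\in\BP((\bold I_E)_k)$ with $P=f^k$ for some $f\in E^*$ form the intersection of the linear subspace $\BP((\bold I_E)_k)$ with the degree-$k$ rational normal curve $\{[f^k]\}\subset\BP(S^kE^*)$, hence a finite set $[P_1]\hd [P_R]$; each $P_i$ excludes exactly one point of $\BP E$, so all but finitely many $x\in\BP E$ are good. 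A standard incidence argument (the bad pairs $(x,E)$ sweep out a subvariety of $\cI_B$ of dimension at most $\tdim B<\tdim\cI_B$) then yields the conclusion for general $x\in X_B$ and general $E\in\cC_x$, which is exactly what generic ampleness on $\bcx$ requires. Until you supply this (or an equivalent) argument, the proof is incomplete at its decisive point.
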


\begin{proof} 
First, global generation is clear, as for all the $\hat {\bold M}_j$
we have a surjective map
$$
 S^{\d_j}\bold S\ot \t^*(\cO_{\bpwx}(\d_j))|_{\bold C_x}\ra \hat {\bold M}_j^*\ot  \t^*(\cO_{\bpwx}(\d_j))|_{\bold C_x}.
$$

Now take any choice of splitting $W=\hat x\op W'$, so the left
hand side becomes 
$\t^*(\cO_{\BP(W/\hat x)}\oplus\cO_{\BP(W/\hat x)}(1)\oplus\cdots\oplus \cO_{\BP(W/\hat x)}(\d_j))$
restricted to $\bold C_x$.
Each factor is globally generated and this of course
remains true when restricting to subvarieties.

\smallskip

The locus where the canonical map
$$\BP(\oplus_{i=0}^{\d_j}\cO_{\bpwx}(-i))
\ra \BP (H^0(\bpwx, \oplus_{i=0}^{\d_j}\cO_{\bpwx}(i))^*)
$$
is not finite is the $\BP \cO_{\bpwx}$ factor.
Hence, when we restrict to $\cC_x$ and pull-back to $\bold C_x$, $\Damp(\hat {\bold M}_j^*\ot \t^*(\cO_{\bpwx}(\d_j))|_{\bold C_x})$ is contained in the union of the following two loci:\begin{itemize}
\item the locus where the map $\t\co \bold C_x\rightarrow \cC_x$ has positive-dimensional fibers;
\item the projection to 
$\bold C_x$ of the locus where the image of 
$$\BP(\hat {\bold M}_j\ot  \t^*(\cO_{\bpwx}(-\d_j))|_{\bold C_x})
\rightarrow 
\BP(\oplus_{i=0}^{\d_j}\t^*\cO_{\bpwx}(-i))
$$
 intersects $\BP(\t^*(\cO_{\bpwx}))$.
\end{itemize}  
The lemma will  follow  from
Lemma~\ref{notallP} below and the fact that 
   the general fiber of $\bold C_x\rightarrow \cC_x$ is finite if $x$ is general. 
Note that the image of $\BP\left(\hat{\bold M}_j\ot  \t^*(\cO_{\bpwx}(-\d_j))|_{\bold C_x}\right)$ inside 
$\BP(\oplus_{i=0}^{\d_j}\t^*\cO_{\bpwx}(-i))$
intersects $\BP(\t^*\cO_{\bpwx}|_{\bold C_x})$ precisely over the points $E\in\bold C_x$ such that the fiber $\hat{\bold M}_{j,E}$ contains a nonzero polynomial vanishing at $x$ with multiplicity $\d_j$. 
\end{proof}

\begin{lemma}\label{notallP} For general $x\in X_{B}$
 and general $E\in \cC_x$,
all nonzero elements $P\in  (\bold I_E)_k$ vanish
at $x$ with multiplicity at most $k-1$
for any integer $k\leq \delta_c$.
\end{lemma}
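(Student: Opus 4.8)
The plan is to translate the statement into a purely numerical fact about the graded ideal $\bold I_E=(p^E_1,\dots,p^E_r)\subset\text{Sym}(E^*)$, which for general $E\in B'$ is exactly the ideal generated by the $p^E_j$. Since $E^*$ is two-dimensional, I will regard each $(\bold I_E)_k\subset S^kE^*$ as a space of binary forms and a point $x\in\BP E$ as a linear form $\a_x\in E^*$ (up to scale) with $\a_x(\hat x)=0$. The elementary starting point is that a nonzero $Q\in S^kE^*$ vanishes at $x$ to order exactly $k$ if and only if $Q$ is a scalar multiple of $\a_x^{\,k}$. Hence the conclusion of the lemma, that every nonzero $P\in(\bold I_E)_k$ vanishes at $x$ to order at most $k-1$, is precisely the statement $\a_x^{\,k}\notin(\bold I_E)_k$.

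Next I would eliminate the dependence on $x$. For fixed general $E$ the set $\{x\in\BP E:\a_x^{\,k}\in(\bold I_E)_k\}$ is Zariski closed, so the general $x\in\BP E$ satisfies $\a_x^{\,k}\notin(\bold I_E)_k$ unless \emph{every} $x$ does. In characteristic zero the $k$-th powers $\ell^k$, $\ell\in E^*$, span all of $S^kE^*$, so ``$\a_x^{\,k}\in(\bold I_E)_k$ for all $x$'' is equivalent to $(\bold I_E)_k=S^kE^*$. Since a general point of the incidence correspondence $\ci_B$ is a pair $(x,E)$ with $E$ general in $B$ and $x$ general on $\BP E$ (and this is the same as taking $x$ general in $X_B$ and $E$ general in $\cC_x$), the lemma is equivalent to the assertion that, for general $E$,
\[
(\bold I_E)_k\subsetneq S^kE^*\qquad\text{for all }k\le\d_c .
\]

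The key structural input is that the Hilbert function $h(k):=\tdim\big(S^kE^*/(\bold I_E)_k\big)$ is supported on an interval $[0,D]$: if $(\bold I_E)_k=S^kE^*$ then $(\bold I_E)_{k+1}\supseteq E^*\circ(\bold I_E)_k=S^{k+1}E^*$, so once $h$ vanishes it stays $0$. Therefore it suffices to establish $h(k_0)>0$ for a single $k_0\ge\d_c$, and I will take $k_0=d-1$, which works since $\d_c=d-s_r\le d-1$ as $s_r\ge1$. I claim $(\bold I_E)_{d-1}$ equals the image of the contraction map
\[
\bar\s\co W/E\lra S^{d-1}E^*,\qquad w\mapsto(w\intprod P)|_E .
\]
Indeed, the normal form of Lemma~\ref{normalform} gives $(w_i\intprod P)|_E=(\a^1)^a(\a^2)^b\,p^E_{k(i)}$ with $a+b=s_{k(i)}-1$, so $\tim\bar\s\subseteq(\bold I_E)_{d-1}$; conversely $(\bold I_E)_{d-1}=\sum_j p^E_j\circ S^{s_j-1}E^*$, and the monomials $(\a^1)^a(\a^2)^b$ of degree $s_j-1$ recover exactly these vectors, giving the reverse inclusion. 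Since $\ker\bar\s=\Pi$ by Proposition~\ref{tangcx}, and $T_E\cC_x=\hat x^{\perp E}\ot\Pi$ forces $\tdim\Pi=\tdim\cC_x$ at a general $E$, we obtain
\[
\tdim(\bold I_E)_{d-1}=\tdim(W/E)-\tdim\Pi=(n-1)-\tdim\cC_x=:m .
\]

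Thus $h(d-1)=d-m$, and the whole lemma reduces to the inequality $m<d$; this is where the global hypotheses enter. For Conjecture~\ref{mainthm} one has $d\ge n$, whence $m\le n-1<d$; for Conjecture~\ref{thmb} one has $\tdim\cC_x=\tcodim(X_B,X)\ge\l$ (using $\tdim B=n-2$), so $m=(n-1)-\tdim\cC_x\le n-1-\l=d-1<d$. In either case $h(d-1)>0$, and by the interval property $h(k)>0$ for all $k\le d-1$, in particular for all $k\le\d_c$, which is what is required. I expect the main obstacle to be precisely the combination of the identification $(\bold I_E)_{d-1}=\tim\bar\s$ (so that this space has the expected dimension $m$ at a general $E$) with the bound $m<d$: the local degree hypothesis $d\ge1+s_1$ is \emph{not} sufficient, since many short chains can make $m$ exceed $d$, in which case (with $s_r=1$) the conclusion genuinely fails; the argument must therefore invoke the ambient numerical regime of the conjecture.
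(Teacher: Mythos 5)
Your proof is correct and, in its first half, is the paper's own argument in different clothing: the paper notes that the pure powers $\{[f^k]\,:\,f\in E^*\}\cap\BP((\bold I_E)_k)$ form the intersection of a linear subspace with a degree~$k$ rational normal curve, hence finitely many points, which a general $x\in\BP E$ avoids; your observation that the bad locus $\{x:\a_x^{\,k}\in(\bold I_E)_k\}$ is Zariski closed in $\BP E$ and equals all of $\BP E$ only when $(\bold I_E)_k=S^kE^*$ is exactly the same dichotomy. Where you genuinely depart from (and improve on) the written proof is the second half. The paper's ``hence it consists of at most a finite number of points'' silently assumes $(\bold I_E)_k\subsetneq S^kE^*$ --- if the ideal were everything in degree $k$, the linear space would contain the whole rational normal curve and the lemma would fail. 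You verify this nondegeneracy explicitly: the identification of $(\bold I_E)_{d-1}$ with the image of $w\mapsto (w\intprod P)|_E$, of dimension $m=n-1-\tdim\Pi$, together with the monotonicity of the Hilbert function of a graded ideal in two variables, reduces everything to the inequality $m<d$, which you correctly extract from the global numerical hypotheses of Conjectures~\ref{mainthm} and~\ref{thmb} rather than from the local normal form alone. This is a worthwhile clarification that the paper leaves implicit. One small correction to your closing remark: $m>d$ can never occur, since the contraction map is injective on the $m$-dimensional space $W/(E+\Pi)$ with values in the $d$-dimensional space $S^{d-1}E^*$; the only possible failure is the boundary case $m=d$ (compare the cubic example following Lemma~\ref{normalform}, where $m=d=3$). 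This does not affect your argument, but the obstruction you flag is narrower than you suggest.
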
 
\begin{proof}  
Fix $E\in B$. Then
the locus
$$\{[P]\in \BP((\bold I_E)_k)\, \mid \,  P=f^k\text{ for some }f\in E^*\}$$
is the intersection of $\BP((\bold I_E)_k)$ with a degree $k$ rational normal curve contained in $\BP(S^kE^*)$. Hence, it consists of at most a finite number of points $[P_1],\dots,[P_R]$. Thus it suffices to choose a point $x\in \BP E$ such that $P_j(x)\neq 0$ for all $j=1,\dots,R$.
\end{proof} 

\section{Proof of Theorem~\ref{m1lemma}}

Lemma~\ref{m1lemma} is a consequence of  Lemma~\ref{genamplelemma} for $j=1$, combined with the following lemma
with $M=\hat {\bold M}_1|_{\bold C_x}$.

\begin{lemma}\label{M1first}
Let $M\subset S^p\bold S^*|_{\bold C_x}$ be a vector bundle 
such that $M^*\otimes\cO_{\bold C_x}(p)$ is generically ample. Then the zero locus   of the canonical section of $q^*M^*\otimes\cO_{\BP\bold S|_{\bold C_x}}(p)$ is of dimension
at least $\tdim \bold C_x+1-\trank (M)$.
\end{lemma}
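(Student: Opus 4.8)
The plan is to reduce the statement to the nonvanishing of a top Chern class, which we then extract from the generic ampleness hypothesis. Write $Y=\BP\bold S|_{\bold C_x}$, $q\co Y\to\bold C_x$ for the projection, $m=\trank M$, and $\cF=q^*M^*\ot\cO_{\BP\bold S|_{\bold C_x}}(p)$, a bundle of rank $m$ on the $(\tdim\bold C_x+1)$-dimensional variety $Y$. Two elementary reductions come first. If the canonical section $\bold s$ of $\cF$ has nonempty zero locus, then each component of $Z(\bold s)$ has codimension at most $m$ in $Y$ by Krull's height theorem (locally $Z(\bold s)$ is cut out by $m$ equations), so $\tdim Z(\bold s)\geq\tdim\bold C_x+1-m$, which is the desired bound. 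On the other hand, if $Z(\bold s)=\emptyset$ then $\bold s$ trivializes a sub-line-bundle $\cO_Y\hookrightarrow\cF$, so $\cF\cong\cO_Y\op\cF'$ and $c_m(\cF)=0$. Hence it suffices to prove $c_m(\cF)\neq0$ in the Chow ring $A^*(Y)$.

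The key geometric input for computing $c_m(\cF)$ is that $x$ lies on every line parametrized by $\bold C_x$: the constant family $\hat x$ is a trivial sub-line-bundle $\cO_{\bold C_x}\hookrightarrow\bold S|_{\bold C_x}$, with quotient the pullback of the tautological subbundle on $\BP(W/\hat x)$, namely $\cO_{\bold C_x}(-1)$. Writing $\lambda=c_1(\cO_{\bold C_x}(1))$ we therefore get $c_1(\bold S|_{\bold C_x})=-\lambda$ and $c_2(\bold S|_{\bold C_x})=0$, so with $\xi=c_1(\cO_{\BP\bold S}(1))$ the Grothendieck relation collapses to $\xi(\xi-q^*\lambda)=0$, i.e. $\xi^{\,j}=q^*\lambda^{\,j-1}\,\xi$ for $j\geq1$. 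By the projective bundle formula $A^*(Y)$ is free over $A^*(\bold C_x)$ on $1,\xi$. Expanding $c_m(\cF)=\prod_j(q^*b_j+p\xi)$ in the Chern roots $b_j$ of $M^*$ and using the relation gives
\begin{equation*}
c_m(\cF)=q^*c_m(M^*)+(q^*B)\,\xi,\qquad B:=\sum_{k=0}^{m-1}c_k(M^*)\,p^{\,m-k}\lambda^{\,m-k-1}\in A^{m-1}(\bold C_x).
\end{equation*}
Because $1,\xi$ are a free basis, $c_m(\cF)=0$ forces $c_m(M^*)=0$ and $B=0$.

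Now I bring in $\cE=M^*\ot\cO_{\bold C_x}(p)$. The same expansion on $\bold C_x$ gives $c_m(\cE)=c_m(M^*)+\lambda B$, so $c_m(\cF)=0$ would imply $c_m(\cE)=0$. Thus it is enough to show $c_m(\cE)\neq0$. When $m=\trank M\leq\tdim\bold C_x$ this is immediate: $\cE$ is generically ample by hypothesis, and a generically ample bundle of rank at most the dimension has $c_1(\cE)\hd c_m(\cE)$ all positive, in particular nonzero.

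The remaining, and genuinely delicate, case is the boundary value $m=\tdim\bold C_x+1$ (which can occur, since $\trank M\leq\tdim\cC_x+1=\tdim\bold C_x+1$). Here $c_m(\cE)$ vanishes for dimensional reasons and the reduction above says nothing; one must instead show that the $0$-cycle $q_*c_m(\cF)=B$ has positive degree, equivalently that $c_m(\cF)\neq0$ on the $(\tdim\bold C_x+1)$-dimensional $Y$. This is exactly where the full force of generic ampleness—rather than mere positivity of $c_1$—is needed: global generation of $\cE$ (established in the proof of Lemma~\ref{genamplelemma}) makes $c_m(\cF)$ an effective class, hence $\deg B\geq0$, and the generic finiteness of $\BP\cE^*\to\BP(H^0(\cE)^*)$ must be used to rule out $\deg B=0$, i.e. to rule out that a general section of $\cF$ is nowhere vanishing. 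I expect this strict-positivity step—equivalently, promoting generic ampleness of $\cE$ on $\bold C_x$ to generic ampleness of $\cF$ on $Y$, where $\trank\cF=m\leq\tdim Y$ then yields positivity of $c_m(\cF)$ directly—to be the main obstacle, and the place where one either restricts to a general subvariety on which the rank drops to at most the dimension, or argues with the disamplitude locus as in Lemma~\ref{genamplelemma}.
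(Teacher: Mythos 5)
Your reduction of the statement to the nonvanishing of $c_{\trank M}\bigl(q^*M^*\ot\cO_{\BP\bold S|_{\bold C_x}}(p)\bigr)$, together with the Chow-group computation on the $\BP^1$-bundle exploiting the trivial subbundle $\hat x\subset\bold S|_{\bold C_x}$, is correct and handles the case $\trank M\leq\tdim\bold C_x$ by a route genuinely different from (and arguably cleaner than) the paper's. (A minor point: a nowhere-vanishing section need not split off a trivial summand, but the exact sequence $0\ra\cO\ra\cF\ra\cF'\ra 0$ already forces $c_m(\cF)=0$ by Whitney, which is all you use.)

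The genuine gap is the boundary case $\trank M=\tdim\bold C_x+1$, which you flag but do not close. This case is allowed by the hypotheses (indeed $\trank\hat{\bold M}_1\leq r\leq\tdim\cC_x+1$) and it is precisely the case that carries the content of Theorem~\ref{m1lemma}: there the assertion is nonemptiness of the zero locus, which is what produces singular points. Your proposed fix --- promoting generic ampleness from $M^*\ot\cO_{\bold C_x}(p)$ on $\bold C_x$ to $q^*M^*\ot\cO_{\BP\bold S}(p)$ on $\BP\bold S|_{\bold C_x}$ --- runs into a real obstruction: $\cO_{\BP\bold S}(1)$ is trivial along the tautological section $\xi_0=\{(E,x)\}$, so your $\cF$ restricts there to plain $M^*$; global generation of $\cF$ on all of $\BP\bold S|_{\bold C_x}$ and finiteness of the map to the projective space of sections both threaten to fail along $\xi_0$, so neither the effectivity of $c_m(\cF)$ nor its strict positivity follows from the stated hypotheses. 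The paper closes exactly this case by a different device: it splits off the corank-one subbundle $N\subset M$ of forms vanishing at $x$ (if $N_E=M_E$ for some $E$ one is already done), applies Lemma~\ref{hitssubbundleslemma} to $N$ --- whose rank is now $\leq\tdim\bold C_x$, so positivity of the top Chern class of the generically ample $N^*\ot\cO_{\bold C_x}(p)$ does apply --- to produce a curve $Z$ in the zero locus of the section of $q^*N^*\ot\cO(p)$ dominating a curve $\xi\subset\bold C_x$, and then finishes on the ruled surface $\BP(\bold S)|_\xi$ via the intersection computation of Lemma~\ref{itcone}, showing that $Z$ must meet the zero divisor of the section of $q^*L^*\ot\cO(p)$ induced by the quotient line bundle $L=M/N$. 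Some such additional geometric input at $x$ is unavoidable; the Chern-class formalism alone does not see it.
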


The proof of Lemma \ref{M1first} follows by several reductions
which reduce the question to a basic fact about
intersections on nontrivial $\pp 1$-bundles over a curve:

\begin{lemma}\label{itcone} 
Let $p_\xi\co S\rightarrow \xi$ be a $\BP^1$-bundle over a curve $\xi$,
with a section $e\co \xi \rightarrow S$ of negative self-intersection. 
If $\tilde D_1,\tilde D_2$ are effective divisors of $S$ not contained in the image of $e$   such that the restriction of $p_\xi$ to each of them is finite,
then $\tilde D_1\cap \tilde D_2\neq\emptyset$.
\end{lemma}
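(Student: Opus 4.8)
The plan is to reduce the set-theoretic statement $\tilde D_1\cap \tilde D_2\neq\emptyset$ to a computation of intersection numbers on the smooth projective surface $S$. We may assume $\xi$ is smooth (passing to its normalization and pulling everything back if necessary), so that $S=\BP(\cE)$ is a smooth ruled surface. In the numerical equivalence group of $S$, let $f$ denote the class of a fiber of $p_\xi$ and abbreviate $e^2=-k$, where $k>0$ by hypothesis; then $f^2=0$, $\,e\cdot f=1$, and every divisor class is numerically $\equiv a\,e+b\,f$ for suitable integers $a,b$. Because $p_\xi|_{\tilde D_i}$ is finite, no irreducible component of $\tilde D_i$ can be a fiber (a fiber would map to a point), so each component surjects onto $\xi$; hence $\tilde D_i\to\xi$ is finite and surjective, and writing $\tilde D_i\equiv a_i e+b_i f$ we obtain $a_i=\tilde D_i\cdot f\geq 1$.

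The key point is to bound $b_i$ from below, and this is exactly where the negativity of $e^2$ enters. First I would reduce to the case in which $e(\xi)$ is a component of neither $\tilde D_i$: if it is a component of both, the two divisors already share the curve $e(\xi)$ and we are done; if it appears in $\tilde D_1$ only, I write $\tilde D_1=m\,e+D_1'$ with $m\geq 1$ and $D_1'$ effective and nonzero (since $\tilde D_1\not\subseteq e(\xi)$), containing neither $e$ nor a fiber as a component, so that it suffices to intersect $D_1'$ with $\tilde D_2$. Once neither divisor contains $e$, every component $C$ of $\tilde D_i$ is an irreducible curve distinct from the irreducible curve $e(\xi)$, whence $C\cdot e\geq 0$; summing over components gives $\tilde D_i\cdot e=-k\,a_i+b_i\geq 0$, that is, $b_i\geq k\,a_i$.

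It then remains to expand
\[
\tilde D_1\cdot \tilde D_2=(a_1e+b_1f)\cdot(a_2e+b_2f)=-k\,a_1a_2+a_1b_2+a_2b_1\geq -k\,a_1a_2+k\,a_1a_2+k\,a_1a_2=k\,a_1a_2\geq k>0,
\]
and to pass from a positive intersection number to a nonempty intersection: if $\tilde D_1$ and $\tilde D_2$ share a common irreducible component they clearly meet, and otherwise $\tilde D_1\cdot\tilde D_2$ equals the sum of the nonnegative local intersection multiplicities at the points of $\tilde D_1\cap\tilde D_2$, so its positivity forces the intersection to be nonempty. I expect the only real subtlety to lie in the bookkeeping of the reduction step, namely correctly peeling off components equal to $e(\xi)$ and checking that finiteness guarantees $a_i\geq 1$ throughout; the heart of the matter — that $e^2<0$ together with the inequalities $b_i\ge k\,a_i$ forces $\tilde D_1\cdot\tilde D_2>0$ — is a short and robust computation, and it is precisely the negative self-intersection of the section that makes the statement true.
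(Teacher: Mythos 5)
Your proof is correct and follows essentially the same route as the paper: decompose the divisor classes as $a_ie+b_if$ in the numerical equivalence group of the ruled surface, use finiteness to get $a_i\geq 1$ and the effectivity of components distinct from $e(\xi)$ to get $b_i\geq ka_i$, and conclude $\tilde D_1\cdot\tilde D_2\geq ka_1a_2>0$. The only difference is that you spell out the reduction to the case where $e(\xi)$ is a component of neither divisor, which the paper handles implicitly by passing to irreducible components different from $e(\xi)$.
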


\begin{proof} 
The Picard group of $S$ is generated by the class $\xi_0$ of the image of $e$ and the class $F$ of a fiber of $p_\xi$. Since $S$ is not a product, one has $F^2=0$, $\xi_0\cdot F=1$ and $\xi_0^2=-k$ with $k$ a positive integer. Choose irreducible components $D_1$, $D_2$ of the divisors, different from the image of $e$. Then $D_i=a_i\xi_0+b_iF$ with $a_i\geq 1$ (since it is the degree of $p_\xi|_{D_i}$) and $D_i\cdot \xi_0 = b_i-a_ik\geq 0$. Then $D_1\cdot D_2=-a_1a_2k+a_1b_2+a_2b_1\geq a_1a_2k>0$. From this the claim follows.
\end{proof}

The proof of Lemma \ref{M1first} relies on the following
Lemma:

\begin{lemma}\label{hitssubbundleslemma}
Let $M\subset S^p\bold S^*|_{\bold C_x}$ be a vector bundle 
such that $M^*\otimes\cO_{\bold C_x}(p)$ is generically ample. 
Let $W'\subset W$ be any hyperplane not containing $\hat x$,   set $H'=\cC_x\cap \BP W'$, and let $H\subset\BP(\bold S|_{\bold C_x})$ be the preimage of $H'$ under $\tau\co\BP(\bold S|_{\bold C_x})\rightarrow\tilde\cC_x$.
Let $\bold s_M$ denote the canonical section of $q^*M^*\otimes\cO_{\BP\bold S|_{\bold C_x}}(p)$.

Then the intersection $Z(\bold s_M)\cap H$ has  dimension
at least $\tdim \bold C_x-\trank (M)$. 
In particular, it is nonempty if $\trank M \leq \tdim \bold C_x$. 
\end{lemma}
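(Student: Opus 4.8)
The plan is to restrict the canonical section $\bold s_M$ to the hyperplane section $H$, recognize the restriction as a section of the \emph{generically ample} bundle $M^*\ot\cO_{\bcx}(p)$, and then invoke the positivity of Chern classes of generically ample bundles to force a zero.

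First I would describe the geometry of $H$. Since $W'$ is a hyperplane with $\hat x\not\subset W'$, the point $x=[\hat x]$ lies off $\BP W'$, so every ruling $\BP E$ of the cone $\tilde\cC_x$ (each of which passes through $x$) is not contained in $\BP W'$ and therefore meets it in the single point $[E\cap W']$. Consequently $H'=\tilde\cC_x\cap\BP W'$ maps isomorphically onto $\cC_x$, and $E\mapsto E\cap W'$ defines a rank one sub-bundle of $\bold S|_{\bcx}$ (it has constant rank $1$ because $\hat x\not\subset W'$ forces $E\not\subset W'$ for every $E\in\bcx$). Thus $H=\t\inv(H')$ is the section of the $\BP^1$-bundle $q\co\BP(\bold S|_{\bcx})\ra\bcx$ associated with this sub-bundle, and $q|_H\co H\ra\bcx$ is an isomorphism; in particular $\tdim H=\tdim\bcx$.

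Next I would identify the restricted twist. Using the splitting $E=\hat x\op(E\cap W')$ (valid because $\hat x\cap W'=0$), the fibre $E\cap W'$ of $\cO_{\BP\bold S}(-1)|_H$ over $E$ projects isomorphically onto $E/\hat x$, which is exactly the fibre of $\t^*\cO_{\bpwx}(-1)$ under the identification of $\cC_x$ with its image $E\mapsto[E/\hat x]$ in $\bpwx$. Hence $\cO_{\BP\bold S}(p)|_H\cong\t^*\cO_{\bpwx}(p)|_{\bcx}=\cO_{\bcx}(p)$, and so, transporting along $q|_H$, the restriction $\bold s_M|_H$ is a section over $\bcx\cong H$ of the bundle $\cE:=M^*\ot\cO_{\bcx}(p)$, which is generically ample by hypothesis. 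Writing $r:=\trank M=\trank\cE$, the positivity recalled in the previous section gives $c_r(\cE)\neq 0$ in the Chow group of $\bcx$ whenever $r\leq\tdim\bcx$. If $\bold s_M|_H$ were nowhere vanishing it would define a sub-bundle $\cO_{\bcx}\hookrightarrow\cE$ with locally free quotient $\cE'$ of rank $r-1$, whence by the Whitney formula $c_r(\cE)=c_r(\cE')=0$, a contradiction. Therefore $Z(\bold s_M)\cap H=Z(\bold s_M|_H)$ is nonempty; being the zero scheme of a section of a rank $r$ bundle, each of its components has codimension at most $r$ in $H$, giving $\tdim\big(Z(\bold s_M)\cap H\big)\geq\tdim\bcx-r$. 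When $r>\tdim\bcx$ the asserted bound is negative and hence vacuous, so the statement holds in all cases.

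The step I expect to be most delicate is the clean identification $\cO_{\BP\bold S}(p)|_H\cong\cO_{\bcx}(p)$: one must verify that cutting with the hyperplane $\BP W'$ reproduces exactly the twist $\t^*\cO_{\bpwx}(p)|_{\bcx}$ occurring in the generic ampleness hypothesis, rather than some other line bundle, and the explicit splitting $E=\hat x\op(E\cap W')$ is what makes this canonical. Once this identification is secured the remainder is formal. A secondary point to treat carefully is that $\bcx$ need not be smooth, reduced, or irreducible; I would therefore apply both the Chern-class positivity and the codimension estimate componentwise, which is harmless since each assertion is valid over an arbitrary projective base.
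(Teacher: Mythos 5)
Your proof is correct and follows essentially the same route as the paper's: restrict $\bold s_M$ to the hyperplane section $H\cong\bold C_x$, identify $\cO_{\BP\bold S}(p)|_H$ with $\cO_{\bold C_x}(p)$, and use positivity of the top Chern class of the generically ample bundle $M^*\ot\cO_{\bold C_x}(p)$ to produce a zero, followed by the standard codimension-$\leq\trank M$ bound for zero loci of sections. The only cosmetic difference is that the paper deduces nonemptiness from Fulton's localized top Chern class supported on the zero scheme, whereas you argue by contradiction using the trivial sub-bundle generated by a nowhere-vanishing section and the Whitney formula; the two mechanisms are interchangeable here.
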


\begin{proof}
Consider the section $s_{M,W'}\in H^0(H',q^*\otimes\cO_{\BP(\bold S|_{\bold C_x})}(p))$ obtained by restricting $\bold s_M$ to $H'$. Then we have 
$Z(\bold s_M)\cap H= Z(s_{M,W'})$.

Observe that $\rho\co\tilde\cC_x\rightarrow \cC_x$ and $q\co\BP(\bold S|_{\bold C_x})\rightarrow \bold C_x$ become isomorphisms when restricted to, respectively, $H'$ and $H$. In particular, since $H'$ was an hyperplane section of $\cC_x$, the isomorphism $H\cong\bold C_x$ so obtained induces an isomorphism $\cO_{\BP(\bold S|_{\bold C_x})}(1)|_H\cong\cO_{\bold C_x}(1)$. Since the isomorphism $H\cong\bold C_x$ also induces an isomorphism $(q^*M)|_H\cong M$, one can view $s_{M,W'}$ as a global section of $M^*\otimes\cO_{\bold C_x}(p)$. Therefore, if $Z(s_{M,W'})\subset H$ is nonempty, it has codimension at most $\trank M$ in $H$ \cite[Prop. 14.1b]{Fulton}. It remains to show $Z(s_{M,W'})\neq\emptyset$ if $\trank M\leq\tdim\bold C_x$. 

Recall from \cite[\S 14.1]{Fulton} that there is a localized Chern class associated to the section $s_{M,W'}$, which is a class in the Chow group of $Z(s_{M,W'})$ whose pull-back under the inclusion $Z(s_{M,W'})\rightarrow \cC_x$ is the top Chern class of $M^*\otimes\cO_{\bold C_x}(p)$. 
Since $M^*\otimes\cO_{\bold C_x}(p)$ is generically ample and of rank $\leq\tdim\bold C_x$, its top Chern class is positive. So the Chow group of $Z(s_{M,W'})$ contains a nontrivial class, and in particular $Z(s_{M,W'})$ cannot be empty.
\end{proof}

\begin{proof}[Proof of Lemma~\ref{M1first}]
For every $E\in\bold C_x$, consider $N_E:=(S^{p-1}E^*\circ\hat x\upperp) \cap M_E$, the linear subspace of $M_E$ of forms vanishing on the point $x$. Without loss of generality, when $E$ varies $N_E$ gives rise to a vector subbundle $N\subset M$ of codimension 1. Indeed, if it were not so, there would be a point $E\in\bold C_x$ such that $N_E=M_E$, and then  $(E,x)$ would be a point of the zero locus
of the canonical section, thus implying the claim.

We have an exact sequence $0\rightarrow N \rightarrow M\rightarrow L \rightarrow 0$ where $L$ is the quotient line bundle. 
Since $q^*N^*\otimes\cO_{\bold C_x}(p)$ is a corank $1$ quotient of $q^*M^*\otimes\cO_{\bold C_x}(p)$, we can apply Lemma~\ref{hitssubbundleslemma} to it. 
Hence, the zero locus of the canonical section of $q^*N^*\otimes\cO_{\bold C_x}(p)$ contains an irreducible component $Z$ which intersects all subvarieties $H\subset \BP(\bold S)|_{\bold C_x}$ which come from preimages of general hyperplane sections of $\tilde\cC_x$.

Without loss of generality, we may assume that $Z$ is of dimension $1$, and that $q':=q|_Z:\;Z\rightarrow q(Z)=:\xi$ is a finite surjective map.
Recall that the group of Weil divisors (up to numerical equivalence)
of the ruled surface $\BP(\bold S)|_{\xi}$ is generated by the class $\xi_0$ of the tautological section of $q'$ (i.e., $(\xi_0)_E=(E,x)$) and the class $F$ of a fiber of $q$. From the effectivity of $Z$ and from Lemma~\ref{hitssubbundleslemma} we obtain $Z\cdot F\geq 1$, $Z\cdot \xi_0\geq 0$. 

To prove the claim, it suffices to show $Z\cdot c_1(q^*L^*|_{\xi}\otimes\cO_{\BP\bold S|_{\xi}}(p))>0$. We have $  c_1(q^*L^*|_{\xi}\otimes\cO_{\BP\bold S|_{\xi}}(p))\cdot F=d$
because
$
c_1(q^* L^*\otimes \cO_{\BP\bold S|_{\xi}}(p))\cdot F =
c_1(q^*L^*)\cdot F + c_1(\cO_{\BP\bold S|_{\xi}}(p))\cdot F = 0 + p = p
$.
Recall that the canonical section of $q^*N^*|_{\xi}\otimes\cO_{\BP\bold S|_{\xi}}(p)$ vanishes on $\xi_0$ by construction. Therefore, the canonical section of $q^*M^*|_{\xi}\otimes\cO_{\BP\bold S|_{\xi}}(p)$ induces a section $s_L$ of $q^*L^*|_{\xi}\otimes\cO_{\BP\bold S|_{\xi}}(p)$ on $\xi_0$. Since $N_E\subsetneq M_E$ for every $E\in\bold C_x$, we have that $s_L$ cannot vanish identically on $\xi_0$. Hence $c_1(q^*L^*|_{\xi}\otimes\cO_{\BP\bold S|_{\xi}}(p))\cdot \xi_0 \geq 0$, because it is the class of $Z(s_L)$ on $\xi_0$. Then the asserted inequality follows from Lemma~\ref{itcone} because $c_1(q^*L^*|_{\xi}\otimes\cO_{\BP\bold S|_{\xi}}(p))$ is linearly equivalent to an effective divisor satisfying the hypotheses of Lemma~\ref{itcone}.
\end{proof}

\section{Two line bundles}\label{twolb}

In this section, we prove the following result, which was announced in section~\ref{sect3}.

\begin{lemma}\label{twolblemma}
Assume $\bold M_1$ and $\bold M_2$ are line bundles, and that the projection $Z(\bold s_1)\rightarrow X_B$ is not surjective. Then for every $x\in X_B$, the zero set of ${\bold s}_2|_{\bold C_x}$ is of codimension at most $2$ in $\bold C_x$.
\end{lemma}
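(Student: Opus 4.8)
The plan is to deduce the assertion---which is precisely the two-factor case of the Chern-class criterion \eqref{stareqn}---from the numerical Lemma~\ref{itcone}, using the non-surjectivity hypothesis to guarantee that $Z(\bold s_1)$ does not get swallowed by the negative section of the relevant ruled surface. Concretely I would show the equivalent statement that $Z(\bold s_2|_{\bold C_x})=Z(\bold s_1)\cap Z(\bold s_2)$ meets $\BP(\bold S)|_{\bold C_x}$ in a subvariety of dimension at least $\tdim\bold C_x-1$. First I would record the geometric content of the hypothesis. Since $\bold M_1$ is a line bundle, $\bold s_1$ is a section of the line bundle $q^*\bold M_1^*\ot\cO_{\BP(\bold S)}(\d_1)$, so $Z(\bold s_1)$ is an effective divisor, and its image $Y:=\pi(Z(\bold s_1))\subsetneq X_B$ is a proper closed subvariety exactly because the projection is not surjective. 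For $x\notin Y$ the $x$-section $\xi_0\co E\mapsto (E,[x])$ of $\BP(\bold S)|_{\bold C_x}$ is disjoint from $Z(\bold s_1)$, since a point $(E,[x])\in Z(\bold s_1)$ would force $x\in Y$; more generally the argument applies to every $x$ for which $\{E\in\bold C_x:(E,[x])\in Z(\bold s_1)\}$ is a proper subvariety of $\bold C_x$. Thus $D_1:=Z(\bold s_1)\cap\BP(\bold S)|_{\bold C_x}$ is an effective divisor whose support avoids $\xi_0$, and $\bold s_1$ is not identically zero on $\BP(\bold S)|_{\bold C_x}$ because it is nonzero along $\xi_0$.

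Next I would cut down to a surface. Choosing $\xi\subset\bold C_x$ to be a general curve---the preimage of a general linear section of $\tilde\cC_x$ of complementary dimension, as in Lemma~\ref{hitssubbundleslemma}---and setting $S=\BP(\bold S)|_\xi$, the surface $S$ is a $\BP^1$-bundle over $\xi$ whose tautological $x$-section $\xi_0$ has negative self-intersection for general $\xi$ (as in the proof of Lemma~\ref{M1first}; if $S$ is a product the conclusion is immediate), so $S$ is of the type treated in Lemma~\ref{itcone}. The curve $C_1:=D_1\cap S$ is an effective divisor disjoint from $\xi_0$; being disjoint from the section it has no fibre components (a fibre meets $\xi_0$), hence it is finite over $\xi$ and not contained in $\xi_0$. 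For the second divisor, choose any lift of the generator of the line bundle $\bold M_2$ to a section $p^E_2$ of $S^{\d_2}\cS^*$ and let $\tilde D_2\subset S$ be its zero divisor; since $p^E_2\not\equiv0$ on each fibre, $\tilde D_2$ is finite over $\xi$ of degree $\d_2\geq 2$, hence is not contained in the degree-one curve $\xi_0$. On $C_1\subset D_1$ the ambiguity $p^E_2\mapsto p^E_2+p^E_1\circ(\,\cdot\,)$ vanishes, so $C_1\cap\tilde D_2=Z(\bold s_1)\cap Z(\bold s_2)\cap S$ is independent of the chosen lift. Lemma~\ref{itcone} now yields $Z(\bold s_1)\cap Z(\bold s_2)\cap S\neq\emptyset$.

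Since this holds for the general curve $\xi$, obtained by imposing $\tdim\bold C_x-1$ general linear conditions, the locus $Z(\bold s_2|_{\bold C_x})$ survives $\tdim\bold C_x-1$ general hyperplane cuts and is therefore of dimension at least $\tdim\bold C_x-1$, that is, of codimension at most $2$ in $\BP(\bold S)|_{\bold C_x}$. The passage from general $x$ to every $x\in X_B$ I would handle by the limiting construction already used in Proposition~\ref{psingpropb} together with upper semicontinuity of fibre dimension, choosing $\xi$ to avoid the proper bad locus of $\bold C_x$ for the special values of $x$.

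The main obstacle is the possibility that $Z(\bold s_1)$ coincides with, or contains, the negative section $\xi_0$---exactly the degenerate configuration excluded in Lemma~\ref{itcone}, and one that genuinely occurs when $\d_1=1$ and $Z(\bold s_1)$ is itself a section. The non-surjectivity hypothesis is precisely what rules this out, by forcing $D_1$ to avoid $\xi_0$. It is worth stressing that, unlike $\bold M_1$, the line bundle $\bold M_2$ is only a quotient of $\hat{\bold M}_2$, so $\bold M_2^*\ot\cO(\d_2)$ is a subbundle of the generically ample bundle $\hat{\bold M}_2^*\ot\cO(\d_2)$ of Lemma~\ref{genamplelemma} and need not itself be positive; the argument sidesteps this because Lemma~\ref{itcone} is purely numerical and requires no ampleness of the second bundle. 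The residual delicate points are the semicontinuity step and the verification that a general $\xi$ meets neither a fibre over which $p^E_2$ degenerates nor, for special $x$, the proper locus where $C_1$ could touch $\xi_0$.
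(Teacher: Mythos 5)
Your overall strategy --- cut to a ruled surface $S=\BP(\bold S)|_\xi$ over a general curve $\xi\subset\bold C_x$, use the non-surjectivity hypothesis to force $Z(\bold s_1)\cap S$ away from the negative tautological section $\xi_0$, and conclude by the numerical Lemma~\ref{itcone} --- is the same as the paper's, and the first half of your argument (the divisor $C_1=Z(\bold s_1)\cap S$ is effective, finite over $\xi$, and disjoint from $\xi_0$) is correct. The gap is in the construction of your second divisor $\tilde D_2$. You ``choose any lift of the generator of the line bundle $\bold M_2$ to a section $p^E_2$ of $S^{\d_2}\cS^*$,'' but $\bold M_2$ is only a quotient in the exact sequence $0\to S^{\d_2-\d_1}\bold S^*\circ\bold M_1\to\hat{\bold M}_2\to\bold M_2\to 0$, and a global lift over $\xi$ is a splitting of this extension, which need not exist. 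Any rank-one subsheaf $L\subset\hat{\bold M}_2$ you choose instead will in general fall into the sub-bundle $S^{\d_2-\d_1}\bold S^*\circ\bold M_1$ over a nonempty divisor $\Delta\subset\xi$; over those fibers the zero divisor of the induced section contains the $\d_1$ points of $C_1$, so $C_1\cap\tilde D_2$ acquires spurious points that are \emph{not} in $Z(\bold s_2)$, and the nonemptiness furnished by Lemma~\ref{itcone} tells you nothing. This cannot be repaired by genericity of $\xi$: the degeneration locus $\Delta$ is an artifact of the global choice of $L$ over $\xi$, not a fixed bad locus in $\bold C_x$ that $\xi$ could avoid. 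Nor can you ``sidestep'' positivity as you claim: what is really needed is $\deg\bigl(\bold M_2^*\ot\cO_\xi(\d_2)\bigr)>0$ (so that $\bold s_2$, a section of a line bundle over the curve $C_1$, must vanish), and since $\bold M_2^*\ot\cO(\d_2)$ is only a \emph{sub}-bundle of the generically ample $\hat{\bold M}_2^*\ot\cO(\d_2)$, this positivity does not follow from Lemma~\ref{genamplelemma} by a degree count --- some genuinely new input is required, and Lemma~\ref{itcone} applied to your two divisors does not supply it.

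The paper's proof gets around exactly this point by replacing your ill-defined lift with an intrinsically defined object: the rank-two subbundle $N\subset\hat{\bold M}_2$ whose fiber $N_E$ consists of the forms vanishing at $x$ with multiplicity $\d_2-\d_1$ (this is a genuine subbundle because, by Lemma~\ref{notallP}, no nonzero element of $\bold M_{1,E}$ vanishes at $x$). Twisting by $\cO_\xi(-\d_2+\d_1)$ lands $N$ inside $S^{\d_1}\bold S^*$, the zero locus of its canonical section is contained in $Z(\hat{\bold s}_2)$, and $N^*\ot\cO_\xi(\d_2)$ is a \emph{quotient} of $\hat{\bold M}_2^*\ot\cO_\xi(\d_2)$, hence generically ample; the rank-two case of Lemma~\ref{M1first} (where the positivity of the quotient line bundle is extracted from the section $s_L$ along $\xi_0$, and Lemma~\ref{itcone} is applied only at the very end) then yields the contradiction. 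If you want to salvage your route, you must either prove the extension above splits over a general $\xi$ (it does not in general) or supply the missing positivity of $\bold M_2^*\ot\cO_\xi(\d_2)$ by some other means; as written, the argument does not close.
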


Therefore, we assume $\bold M_1$ and $\bold M_2$ are both line bundles. 

As in the arguments above, it will be sufficient to work with a general point $x\in X_B$ and a sufficiently general irreducible curve $\xi
 \subseteq \bold C_x$   and show that the
zero set of $\hat{\bold s}_2$ restricted to $\BP(\bold S)|_\xi$ is nonempty.
The proof is based on showing that $Z(\hat{\bold s}_2)\cap\BP(\bold S)|_\xi$ coincides with the zero set of the canonical section of $(q|_\xi)^*N^*\otimes\cO_{\BP(\bold S)|_\xi}(\d_2)$
a rank 2 vector bundle $N\subset S^{\d_2}\bold S|_\xi$ satisfying the hypotheses of Lemma~\ref{M1first}.
We construct $N$ under the assumption that the zero set $Z(\bold s_1)$ does not intersect the tautological section of $\BP(\bold S)|_\xi\rightarrow\xi$. 
 
Since $\bold M_1$ is a line bundle, we have that
$Z(\bold s_1)\subset\BP(\bold S)$ intersects every fiber of
$\BP(\bold S)\rightarrow\cB$ in $\d_1$ points, counted with multiplicity.
This follows from the very construction of the canonical section $\bold s_1$.

Without loss of generality in the choice of $x$ and $\xi$, we may assume:\begin{enumerate}
\item $\cO_{\bold C_x}(1)$ restricts to a generically ample line bundle $\cO_\xi(1)$ on $\xi$.
\item\label{d} $Z:=Z(\hat{\bold s}_1)\cap\BP(\bold S)|_\xi$ is not contained in the tautological section $\xi\rightarrow\BP(\bold S)|_\xi$. 
\item\label{t} $\hat{\bold M}_2^*\otimes\cO_{\bold C_x}(\d_2)$ is generically ample when restricted to $\xi$.
\item\label{q} the map $q|_Z\co Z\rightarrow \xi$ is finite. 
\end{enumerate}
The first assumption follows from the fact that $\bold C_x\rightarrow \cC_x$ is generically finite, so $\bold C_x\not\subset\Damp(\cO_{\bold C_x}(1))$ and the same holds for a generical $\xi\subset\bold C_x$.
Assumption~\eqref{d} follows from the genericity of $x$, and \eqref{t} follow from Lemma~\ref{genamplelemma}. 
Finally, if \eqref{q} did not hold, 
$Z(\hat{\bold s}_2)$ would  contain $\d_2$ points on every $1$-dimensional fiber of $q|_Z$ (counted with multiplicity), thus showing $Z(\hat{\bold s}_2)\neq\emptyset$.

For the rest of this section, we will often omit the restriction to $\xi$ from our notation.

Recall we have short exact sequence:
$$
0\rightarrow S^{\d_2-\d_1}\bold S^*\circ \bold M_1\rightarrow \hat {\bold M}_2\rightarrow \bold M_2\rightarrow0.
$$

As a consequence, the section $\hat{\bold s}_2\in H^0(\BP(\bold S),q^*\hat{\bold M}_2^*\otimes\cO_{\BP(\bold S)}(\d_2))$ canonically induces  a section $s\in H^0(Z,q^*{\bold M}_2^*\otimes\cO_{\BP(\bold S)}(\d_2))$. Assume that $Z(\hat{\bold s}_2)=\emptyset$, i.e., $Z(s)=\emptyset$ on $Z$. Then $s$ induces a trivialization $q^*{\bold M}_2^*|_Z\otimes\cO_{Z}(\d_2)\cong\cO_Z$. 

In this set-up, Lemma~\ref{twolb} is equivalent to the following lemma:
\begin{lemma}
Assume $Z$ does not intersect the image of the tautological section $s_0\co \xi\rightarrow \BP(\bold S)|_\xi$.
Then $Z(\hat{\bold s}_2)\cap\BP(\bold S)|_\xi\neq\emptyset$.
\end{lemma}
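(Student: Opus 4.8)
The plan is to realize $Z(\hat{\bold s}_2)\cap\BP(\bold S)|_\xi$ as the base locus of a rank-two subbundle $N\subset S^{\d_2}\bold S^*|_\xi$ for which $N^*\otimes\cO_\xi(\d_2)$ is generically ample, and then to quote Lemma~\ref{M1first} over the curve $\xi$. First I would record the geometry of the ruled surface $S:=\BP(\bold S)|_\xi$. Since $x\in\BP E$ for every $E\in\cC_x$, the fixed line $\hat x$ is a trivial sub-line bundle of $\bold S|_\xi$; hence $s_0$ is a genuine section, and, exactly as in the proof of Lemma~\ref{M1first}, its self-intersection $s_0^2=\deg(\bold S|_\xi/\hat x)$ is negative, so $s_0$ is the negative section to which Lemma~\ref{itcone} applies. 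The hypothesis that $Z=Z(\hat{\bold s}_1)$ misses $s_0$ translates into $p_1^E(x)\neq0$ for all $E\in\xi$, i.e.\ evaluation at $x$ trivialises $\bold M_1|_\xi$. Finally, since $\hat{\bold M}_2$ contains $\bold M_1\circ S^{\d_2-\d_1}\bold S^*$, the section $\hat{\bold s}_2$ can vanish at $(E,[y])$ only if $p_1^E(y)=0$; thus $Z(\hat{\bold s}_2)\subseteq Z$, and in particular $Z(\hat{\bold s}_2)$ is disjoint from $s_0$.

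For the construction of $N$ I would use the short exact sequence $0\to S^{\d_2-\d_1}\bold S^*\circ\bold M_1\to\hat{\bold M}_2\to\bold M_2\to0$ and take $N\subset\hat{\bold M}_2$ to be spanned by two sub-line bundles: the first with fibre over $E$ spanned by $p_1^E\ell_E^{\d_2-\d_1}$, where $\ell_E$ generates the line bundle $\hat x\upperp\subset\bold S^*|_\xi$ of forms vanishing at $x$, and the second a lift of the generator of $\bold M_2$. The first generator lies in $\tker(\hat{\bold M}_2\to\bold M_2)$ while the second maps to a generator, so the two are everywhere linearly independent and $N$ is a genuine rank-two subbundle. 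Fibrewise on $\BP E$ the first generator vanishes on $\{p_1^E=0\}\cup\{x\}$, and off the section $s_0$ (where $\ell_E\neq0$) the value of the second generator at a zero of $p_1^E$ equals $p_2^E$; hence away from $s_0$ the common zeros of the two generators are exactly $\{p_1^E=p_2^E=0\}$. Therefore the base locus satisfies $Z(\bold s_N)\setminus s_0=Z(\hat{\bold s}_2)$, the two loci differing at most along $s_0$.

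Next I would check that $N^*\otimes\cO_\xi(\d_2)$ is generically ample. Because $N\subset\hat{\bold M}_2$, the bundle $N^*\otimes\cO_\xi(\d_2)$ is a quotient of $\hat{\bold M}_2^*\otimes\cO_\xi(\d_2)$, which is generically ample by assumption~\eqref{t} (Lemma~\ref{genamplelemma}); global generation and generic finiteness pass to quotients, so $N$ satisfies the hypotheses of Lemma~\ref{M1first}. Applying that lemma with $\bold C_x$ taken to be the curve $\xi$ and with $\trank N=2$ gives $\tdim Z(\bold s_N)\geq\tdim\xi+1-\trank N=0$; more precisely, the proof (via Lemma~\ref{hitssubbundleslemma}) produces an irreducible component of $Z(\bold s_N)$ meeting every preimage $H$ of a general hyperplane section $\cC_x\cap\BP W'$ of $\tilde\cC_x$ with $\hat x\not\subset W'$.

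The last step, which I expect to be the crux, is to ensure that this component is not swallowed by the spurious contribution along $s_0$. Here the disjointness hypothesis is decisive: under $\tau$ the section $s_0$ maps to the single point $x$, and $x\notin\BP W'$ precisely because $\hat x\not\subset W'$, so $s_0\cap H=\emptyset$ for every such $H$. Consequently the component furnished by Lemma~\ref{M1first}, which meets all these $H$, cannot be contained in $s_0$; it therefore carries points of $Z(\bold s_N)\setminus s_0=Z(\hat{\bold s}_2)$, proving $Z(\hat{\bold s}_2)\cap\BP(\bold S)|_\xi\neq\emptyset$. The main technical obstacle is exactly this bookkeeping along the negative section: verifying that the base-locus computation for $N$ genuinely recovers $Z(\hat{\bold s}_2)$ off $s_0$, and that the positive top Chern class of $N^*\otimes\cO_\xi(\d_2)$ produced by generic ampleness is represented off $s_0$ rather than being absorbed by the finitely many points of $Z(\bold s_N)\cap s_0$ coming from the lift of $\bold M_2$.
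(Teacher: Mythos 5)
Your setup is the paper's: translate the disjointness hypothesis into $p_1^E(x)\neq 0$ for all $E\in\xi$, build a rank-two subbundle $N\subset\hat{\bold M}_2$ whose base locus computes $Z(\hat{\bold s}_2)$ away from $s_0$, and invoke Lemma~\ref{M1first}. The gap is in the last step. For your $N$, which sits in $S^{\d_2}\bold S^*$ and has rank $2$ over the curve $\xi$, Lemma~\ref{M1first} gives only $\tdim Z(\bold s_N)\geq \tdim\xi+1-2=0$, i.e.\ bare nonemptiness. It does \emph{not} furnish an irreducible component of $Z(\bold s_N)$ meeting every hyperplane preimage $H$: the ``meets every $H$'' statement is Lemma~\ref{hitssubbundleslemma}, whose bound here is $\tdim\xi-\trank N=-1$, hence vacuous; and in the proof of Lemma~\ref{M1first} the component meeting all the $H$'s is a component of the zero locus of the section attached to the \emph{corank-one} subbundle of forms vanishing at $x$ --- the points of $Z(\bold s_N)$ itself are then extracted by intersecting that curve with a first Chern class, and nothing in the statement prevents them from landing on $s_0$. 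Since your first generator $p_1^E\ell_E^{\d_2-\d_1}$ vanishes identically along $s_0$, the locus $Z(\bold s_N)\cap s_0$ is cut out on $s_0$ by the lift of $\bold M_2$ alone and you cannot exclude that it absorbs all of $Z(\bold s_N)$; so nonemptiness of $Z(\bold s_N)$ proves nothing about $Z(\hat{\bold s}_2)$. (A secondary problem: a sub-line bundle of $\hat{\bold M}_2$ lifting the generator of $\bold M_2$ amounts to splitting $0\ra S^{\d_2-\d_1}\bold S^*\circ\bold M_1\ra\hat{\bold M}_2\ra\bold M_2\ra 0$, which need not exist; the one canonical lift that does exist, the forms vanishing at $x$ to order $\d_2-\d_1$, is precisely the one that puts all of $s_0$ into the base locus of your untwisted $N$.)

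The paper closes exactly this hole by a twist you omit. Using $p_1^E(x)\neq 0$, it takes $N_E$ to be the canonical two-dimensional space of elements of $\hat{\bold M}_{2,E}$ vanishing at $x$ with multiplicity at least $\d_2-\d_1$; every such element is divisible by $\eta^{\d_2-\d_1}$ for $\eta\in E^*$ vanishing at $x$, so $N\otimes\cO_\xi(\d_1-\d_2)$ is a rank-two subbundle of $S^{\d_1}\bold S^*$ with fibre spanned by $p_1^E$ and some $\tilde\phi\in S^{\d_1}E^*$. After this division the base locus avoids $s_0$ entirely (because $p_1^E(x)\neq 0$) and is contained in $Z(\hat{\bold s}_2)$, so the bare nonemptiness conclusion of Lemma~\ref{M1first}, applied to $N\otimes\cO_\xi(\d_1-\d_2)$ with $p=\d_1$ and its dual twist $N^*\otimes\cO_\xi(\d_2)$ still a generically ample quotient of $\hat{\bold M}_2^*\otimes\cO_\xi(\d_2)$, already yields the contradiction. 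If you insist on the untwisted $N$, you must redo the intersection bookkeeping on the ruled surface by hand (showing the auxiliary curve from the proof of Lemma~\ref{M1first} is disjoint from $s_0$), rather than quoting the lemma as a black box.
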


\begin{proof}
Assume by contradiction $Z(\hat{\bold s}_2)\cap\BP(\bold S)|_\xi$ is empty.
Fix a line $E\in\xi$. The fiber $\hat{\bold M}_{2,E}$ is spanned by all degree $\d_2$ multiples of polynomials in $\bold M_{1,E}$ and by an additional polynomial $\phi$, which does not vanish on any point of $Z$. 

Recall that no nonzero polynomial in $\bold {M}_{1,E}$ vanishes at $x$. 
Therefore, the condition of vanishing at $x\in\BP(E)$ with multiplicity $\d_2-\d_1$ defines a $1$-dimensional subspace of $S^{\d_2-\d_1}\bold S^*\circ \bold M_{1,E}$, and (for dimensional reasons) a $2$-dimensional subspace $N_E$ of $\hat{\bold M}_{2,E}$.
Hence, without loss of generality we may assume that $\phi$ is a polynomial vanishing at $x$ with multiplicity $\d_2-\d_1$. 
If we let $E$ vary, then $N_E$ defines a rank $2$ vector subbundle $N\subset \hat{\bold M}_2\subset S^{\d_2}\bold S^*$. Moreover, we have $N\otimes \cO_\xi(-\d_2+\d_1)\subset S^{\d_1}\bold S^*$. This follows from the fact that the condition of vanishing at $x$ with multiplicity at least $k$ defines the subbundle $\cO_\xi(k)\oplus \cdots \oplus \cO_\xi(\d_2)\subset \cO_\xi\oplus \cO_\xi(1)\oplus \cdots \oplus \cO_\xi(\d_2)\cong S^{\d_2}\bold S^*$. 
For every $E\in \xi$, if we choose any $0\neq\eta\in E^*$ that vanish on $x\in\BP(E)$, we have that  the fiber of $N\otimes \cO_\xi(-\d_2+\d_1)\subset S^{\d_1}\bold S^*$ over $E\in\xi$ is the locus of degree $\d_1$ polynomials $\psi$ over $\BP(E)$ satisfying $\eta^{\d_2-\d_1}\circ\psi\in \hat{\bold M}_{2,E}$.

By the description of the fibers of $\hat{\bold M}_2$ given above, all points in the zero locus of the canonical section of $q^*(N\otimes \cO_\xi(-\d_2+\d_1))\otimes \cO_{\BP(\bold S)|_\xi}(\d_1)$ belong to $Z(\hat{\bold s}_2)$. Hence, the canonical section of $q^*(N\otimes \cO_\xi(-\d_2+\d_1))\otimes \cO_{\BP(\bold S)|_\xi}(\d_1)$ has empty zero locus.

On the other hand, we have that $(N\otimes \cO_\xi(-\d_2+\d_1))^*\otimes \cO_\xi(\d_1)= N^*\otimes \cO_\xi(\d_2)$ is a quotient of $\hat{\bold M}_2^*\otimes \cO_\xi(\d_2)$, so in particular it is generically ample on $\xi$. Then Lemma~\ref{M1first} implies that the canonical section of $q^*(N\otimes \cO_\xi(-\d_2+\d_1))\otimes \cO_{\BP(\bold S)|_\xi}(\d_1)$ is nonempty. 
Contradiction.
\end{proof}

\end{document}